\definecolor{rossred}{rgb}{1.0,0.25,0.66}  
\definecolor{rossgreen}{rgb}{0.25,0.66,0.25} 
\definecolor{rossblue}{rgb}{0.25,0.66,1.0}
\definecolor{sashapurple}{rgb}{0.5,0.15,0.5}
\numberwithin{equation}{section}
\theoremstyle{plain}
\newtheorem{theorem}{Theorem}[section]
\newtheorem{proposition}[theorem]{Proposition}
\newtheorem{lemma}[theorem]{Lemma}
\newtheorem{corollary}[theorem]{Corollary}
\newtheorem{question}[theorem]{Question}
\theoremstyle{definition}
\newtheorem{definition}[theorem]{Definition}
\newtheorem{remark}[theorem]{Remark}
\newtheorem{example}[theorem]{Example}
\newcommand{\trdeg}{\mbox{\rm trdeg}\,}
\newcommand{\cI}{\mathcal{I}}
\newcommand{\Z}{\mathbb Z}
\newcommand{\depth}{\text{depth}}
\newcommand{\im}{\text{im}}
\newcommand{\ara}{\text{ara}}
\definecolor{campsc}{rgb}{0.44, 0.0, 1.0}
\title{Analytic spread of binomial edge ideals}
\author[E. Camps-Moreno, et. al]{Eduardo Camps-Moreno }
\address[E. Camps-Moreno]
{Department of Mathematics, Virginia Tech, 225 Stanger Street
Blacksburg, VA 24061-1026}
\email{eduardoc@vt.edu}
\author[]{Deblina Dey}
\address[D. Dey]
{Department of Mathematics, Indian Institute of Technology Madras, Chennai, Tamil Nadu 600036, India}
\email{deblina.math@gmail.com}
\author[]{Souvik Dey}
\address[S. Dey]
{Department of Mathematical Sciences
850 West Dickson Street, University of Arkansas
Fayetteville, Arkansas 72701}
\email{souvikd@uark.edu}
\author[]{T\`ai Huy H\`a}
\address[T.H. H\`a]
{Department of Mathematics, Tulane University, 6823 St. Charles Avenue, New Orleans, LA 70118, USA}
\email{tha@tulane.edu}
\author[]{Stephen Landsittel}
\address[S. Landsittel]
{Institute of Mathematics, Hebrew University, Givat Ram, Jerusalem 91904, Israel}
\email{stephen.landsittel@mail.huji.ac.il}
\author[]{Benjamin Oltsik}
\address[B. Olstik]
{Department of Mathematics, University of Connecticut,
341 Mansfield Road, U1009,
Storrs, CT 06269-1009}
\email{broltsik@gmail.com}
\author[]{Shahriyar Roshan Zamir}
\address[S. Roshan Zamir]
{Department of Mathematics, Tulane University, 6823 St. Charles Avenue, New Orleans, LA 70118, USA}
\email{sroshanzamir@tulane.edu}
\author[]{Adam Van Tuyl}
\address[A. Van Tuyl]
{Department of Mathematics and Statistics
McMaster University, Hamilton, ON L8S 4L8, Canada}
\email{vantuyla@mcmaster.ca}
\keywords{analytic spread, binomial edge ideals, closed graphs, Newton-Okounkov.}
\subjclass[2020]{13F65, 13A30, 05E40, 14M25}
\date{\today}
\begin{document}
\begin{abstract}
We investigate the analytic spread $\ell(J_G)$ 
when $J_G$ is the binomial edge ideal 
of a finite simple graph $G$. For any connected
graph $G$ on $n$ vertices, we show that the bounds 
$n-1 \leq \ell(J_G) \leq 2n-3$ hold, and moreover, 
these bounds are tight. For some special families of graphs 
(e.g., closed graphs, pseudo-forests) we compute the exact value of the analytic spread of the corresponding
binomial edge ideal via combinatorial and convex geometric means. 
\end{abstract}

\maketitle

\section{Introduction}\label{sec-1}

The goal of this paper is to study the analytic spread of 
binomial edge ideals. Given a finite simple 
graph $G = (V,E)$ on $n$ vertices, 
its {\it binomial edge ideal} is defined as  
\[
J_G = (x_iy_j - x_jy_i \mid \{i,j\} \in E) 
\subset K[x_1,\ldots,x_n,y_1,\ldots,y_n],
\]
where $K$ is an infinite field. 
Binomial edge ideals were 
introduced independently in \cite{HHHKR} and \cite{ohtani2011graphs},
where foundational properties such as the primary decomposition,
minimal primes, and the Gr\"obner bases of $J_G$ were described. 
Binomial edge ideals continue to attract significant 
attention due to the rich interplay between their algebraic properties and the combinatorics of the underlying graph. 
Some examples of this recent work include
\cite{bolognini2022cohen, ene2022powers, kats, kumar2022rees,PeevaClosed,WT}.

The {\it analytic spread} of an ideal $I$, denoted $\ell(I)$, 
is an important invariant
in commutative algebra that measures the dimension of the special fiber ring
of $I$ (see Section \ref{sec-2} for the formal definition). 
The special fiber ring is the coordinate ring of the fiber over 
the origin of the blowup at the variety defined by $I$.
The study of the Rees algebra, first introduced in 
\cite{ReeseNortcotOG}, and the special fiber ring  have generated a tremendous 
amount of research. A comprehensive list of all such results is beyond our scope but some related papers are 
\cite{PoliniUlrichCarsoGhezzi,cutkosky2024reesalgebrasreducedfiber,HeinzerKim,
herzog2019fiberconemonomialideals, 
lin2021, 
saloni2022};
for an introduction, see the book \cite{SH}.
The value of $\ell(I)$ is also related to the minimal number of 
generators in a reduction of $I$ 
(e.g., \cite[Corollary 8.2.5]{SH}) and to the growth of the number of minimal
generators of $I^d$ as $d \gg 0$ (e.g., \cite{Burch_1972}).

While the analytic spread has been well-studied for monomial ideals 
(see \cite{bivia2003analytic,HN1}), this invariant is largely unexplored in the context of binomial (edge) ideals. Only a handful of results are known. Ene, Rinaldo, and Terai \cite{ene2022powers} showed that for \emph{closed graphs}, a subclass of chordal
graphs, the analytic spread of their binomial edge ideals coincides with that of their initial ideals. Kumar \cite{kumar2022rees} computed the analytic spread of the binomial edge ideal of closed graphs via the number of connected and indecomposable components. Beyond these special cases, the analytic spread of binomial edge ideals remains mysterious.

Our work takes the first steps towards better understanding the 
analytic spread of $J_G$ for general graphs $G$.   One of our main
results provides tight upper and lower bounds for the analytic spread of 
$\ell(J_G)$ for any graph $G$.  Specifically, we prove:

\begin{theorem}[Theorem~\ref{bounds}]
If $G$ be a connected graph on $n$ vertices, then  \begin{equation}\label{eq: intro bounds}    
n - 1 \leq \ell(J_G) \leq 2n - 3.
\end{equation}
\end{theorem}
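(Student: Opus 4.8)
The plan is to turn the statement into a transcendence-degree computation by first identifying the special fiber ring of $J_G$ with a concrete subalgebra of $R=K[x_1,\dots,x_n,y_1,\dots,y_n]$. Write $f_{ij}=x_iy_j-x_jy_i$, so $J_G=(f_{ij}:\{i,j\}\in E)$, and let $\mathfrak m$ be the graded maximal ideal of $R$. Since $J_G$ is generated by forms of degree $2$, the power $J_G^d$ is generated in degree $2d$; hence $(J_G^d)_k=0$ for $k<2d$, one has $(J_G^d)_k\subseteq \mathfrak m J_G^d$ for $k>2d$, and $(\mathfrak m J_G^d)_{2d}=0$ because it is spanned by $\mathfrak m_{\ge 1}\cdot(J_G^d)_{<2d}$. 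So the projection $J_G^d\to J_G^d/\mathfrak m J_G^d$ restricts to an isomorphism on the degree-$2d$ piece, and assembling these gives $\mathcal F(J_G)\cong K[f_{ij}:\{i,j\}\in E]\subseteq R$. Consequently
\[\ell(J_G)=\dim\mathcal F(J_G)=\trdeg_K K(f_{ij}:\{i,j\}\in E),\]
and the theorem becomes a statement about the transcendence degree of a family of $2\times 2$ minors.

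For the upper bound I would note that $\{f_{ij}:\{i,j\}\in E(G)\}\subseteq\{f_{ij}:1\le i<j\le n\}$, so $\ell(J_G)\le \trdeg_K K(f_{ij}:1\le i<j\le n)$. This last quantity is $2n-3$: the $f_{ij}$ are, up to sign, the Plücker coordinates of the row space of the generic $2\times n$ matrix, so $K[f_{ij}]$ is the homogeneous coordinate ring of $\mathrm{Gr}(2,n)$ in its Plücker embedding, of Krull dimension $2(n-2)+1=2n-3$. If one prefers a self-contained argument: a direct expansion shows each $f_{ij}$ is unchanged when the two rows are replaced by $\mathrm{SL}_2(K)$-combinations of themselves, and $\mathrm{SL}_2(K)$ acts freely on the (nonempty for $n\ge 2$) rank-$2$ locus, so the fibers of $(x,y)\mapsto(f_{ij})$ are generically at least $3$-dimensional, forcing $\trdeg_K K(f_{ij})\le 2n-3$.

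For the lower bound I would fix a spanning tree $T$ of the connected graph $G$, so that $|E(T)|=n-1$, and prove that $\{f_e:e\in E(T)\}$ is algebraically independent over $K$; this at once yields $\ell(J_G)\ge n-1$. The argument is an induction on $n$ (the cases $n\le 2$ being immediate): choose a leaf $v$ of $T$ with unique incident edge $\{v,w\}$, so $T\setminus\{v\}$ is a spanning tree of $G\setminus\{v\}$ whose edge minors involve only the variables $x_u,y_u$ with $u\ne v$; apply the inductive hypothesis to it, and then use that $f_{vw}=y_wx_v-x_wy_v$ is transcendental over $L=K(x_u,y_u:u\ne v)$ because it is nonconstant in $x_v$ over $L$. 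A one-line specialization (e.g.\ setting $y_v=0$) shows that no polynomial relation over $K$ among $\{f_e:e\in E(T)\}$ can exist without inducing one among $\{f_e:e\in E(T\setminus\{v\})\}$, completing the induction.

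The single structural input is the identification $\mathcal F(J_G)\cong K[f_{ij}:\{i,j\}\in E]$ — a general phenomenon for ideals generated by forms of one fixed degree in a standard graded ring, and exactly the fact that converts the problem into combinatorics of minors (and which is the natural entry point to the Newton–Okounkov description used later for the exact computations). After that reduction there is no serious obstacle: the upper bound is essentially the classical dimension of the Grassmannian cone, and the only part demanding careful bookkeeping is organizing the leaf-peeling induction for algebraic independence in the lower bound, which is routine.
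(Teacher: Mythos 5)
Your proof is correct, but it reaches both bounds by a genuinely different route than the paper. Both arguments share the same entry point (the identification $F(J_G)\cong K[f_{ij}\mid \{i,j\}\in E]$, which is Theorem~\ref{thm-kit}/Corollary~\ref{intro-spread}), but diverge afterwards. For the lower bound, the paper invokes the arithmetical-rank estimate $\operatorname{ara}(J_G)\ge n+t-2$ of \cite[Theorem 3.5]{kats} together with $\operatorname{ara}\le\ell$ (Remark~\ref{rmk-ara}); you instead pass to a spanning tree and prove directly, by leaf-peeling, that the $n-1$ edge minors of the tree are algebraically independent. Your route is self-contained and is essentially the argument the paper does give later, in Proposition~\ref{corr-leaf} and Corollary~\ref{corr-tree}, for the exact value $\ell(J_T)=|E(T)|$; combined with the subgraph monotonicity of Theorem~\ref{thm-add} it yields the same lower bound without the external citation, and the transcendence step ($f_{vw}$ is a nonconstant polynomial in $x_v$ over $K(x_u,y_u\mid u\ne v)$, so its powers with coefficients in that field cannot cancel) is sound. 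For the upper bound, the paper reduces to $K_n$ via Theorem~\ref{thm-add} and then squeezes $\ell(J_{K_n})=2n-3$ between $\operatorname{ara}(J_{K_n})=2n-3$ (Bruns--Schw\"anzel) and the Burch inequality using $\lim_m\operatorname{depth}(S/J_{K_n}^m)=3$; you instead observe that $K[f_{ij}\mid i<j]$ is the Pl\"ucker algebra of the cone over $\mathrm{Gr}(2,n)$, of dimension $2(n-2)+1=2n-3$, or equivalently bound the generic fiber dimension of $(x,y)\mapsto(f_{ij})$ below by $\dim\mathrm{SL}_2=3$. Both of your arguments are classical and correct (the free $\mathrm{SL}_2$-action on the rank-two locus does give fibers of dimension at least $3$, hence image dimension at most $2n-3$). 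What the paper's approach buys is that it only needs an upper bound for $K_n$ and gets the exact value $\ell(J_{K_n})=2n-3$ as a byproduct of two quoted results; what yours buys is a shorter, essentially citation-free argument that stays entirely inside the transcendence-degree framework the paper sets up in Section~\ref{sec-3}.
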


\noindent
The proof of Theorem \ref{bounds} relies on a number of 
ingredients.  We require the general inequalities
${\rm ara}(I) \leq \ell(I) \leq \mu(I)$,
where ${\rm ara}(I)$ denotes the arithmetic rank of $I$, and
$\mu(I)$ is the number of minimal generators of $I$.   
We also make use of  the inequality 
$\ell(J_{G'}) \leq \ell(J_G)$ if $G'$ is subgraph
of $G$, which is proved in Theorem \ref{thm-add}.  Finally,
we also rely on an analysis of the special fiber ring of $J_G$ via classical techniques of transcendence bases to show that the upper bound in \eqref{eq: intro bounds} is achieved when $G$ is a complete graph. 
The lower bound is achieved when $G$ is a tree.  

Because each generator of $J_G$ corresponds to an edge of $G$, we have $\mu(J_G) = |E|$. Since $\ell(J_G) \leq \mu(J_G)$ always holds, 
it is natural to ask for what graphs do we have the equality? Section~\ref{sec-exact} investigates this question;  
our results in this direction include:

\begin{theorem}[Theorem~\ref{thm-main}]
Let $G$ be a pseudo-forest. Then $\ell(J_G) = |E|$. In particular, this holds when $G$ is a forest or a unicyclic graph.
\end{theorem}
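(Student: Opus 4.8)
The plan is to recast the statement as a question about algebraic independence of binomials and then settle it with a Jacobian rank computation. Because $J_G$ is generated by forms of a single degree (the degree-$2$ binomials $f_e = x_iy_j - x_jy_i$, $e = \{i,j\}\in E$), its special fiber ring is isomorphic as a graded $K$-algebra to the subalgebra $K[f_e : e\in E]\subseteq K[x_1,\dots,x_n,y_1,\dots,y_n]$. This subalgebra is a finitely generated domain over $K$, so $\ell(J_G) = \dim K[f_e : e\in E] = \operatorname{trdeg}_K K(f_e : e\in E) \le |E|$, with equality exactly when $\{f_e\}_{e\in E}$ is algebraically independent over $K$. Hence the theorem reduces to proving: \emph{if $G$ is a pseudo-forest, then $\{x_iy_j - x_jy_i\}_{\{i,j\}\in E}$ is algebraically independent over $K$.}

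For this I would use the Jacobian criterion in the form ``$\operatorname{rank}$ of the Jacobian $\le$ transcendence degree of the extension it generates'' (valid over any field; in positive characteristic one may run the argument over the perfect closure of $K$ without changing either quantity, since the $f_e$ have coefficients in the prime field). Thus it suffices to show that the $|E|\times 2n$ matrix $\mathcal J = \big(\partial f_e/\partial z\big)_{e\in E,\ z\in\{x_1,\dots,x_n,y_1,\dots,y_n\}}$ has rank $|E|$ over the fraction field. Since $\mathcal J$ has only $|E|$ rows, it is enough to exhibit a specialization $x_i\mapsto a_i$, $y_i\mapsto b_i$ with $a_i, b_i\in K$ at which the rows of $\mathcal J$ are linearly independent over $K$ — a nonvanishing maximal minor at the specialized point being a fortiori a nonzero polynomial. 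I would take $a_i = 1$ for all $i$ and $b_i = t_i$, where $t_1,\dots,t_n\in K$ are pairwise distinct (possible since $K$ is infinite).

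The combinatorial core is the linear independence of these specialized rows. Orient each edge $e = (i\to j)$; the only nonzero entries of row $e$ are $\partial_{x_i}f_e = y_j$, $\partial_{y_i}f_e = -x_j$, $\partial_{x_j}f_e = -y_i$, $\partial_{y_j}f_e = x_i$, which specialize to $t_j, -1, -t_i, 1$. Grouping the $2n$ columns into vertex-blocks $(x_k,y_k)$ and writing $u_k := (t_k,-1)\in K^2$, the specialized row $e$ equals $u_j$ in block $i$, equals $-u_i$ in block $j$, and is zero in every other block. Consequently a linear relation $\sum_{e\in E} c_e\,(\text{row }e) = 0$ is equivalent to the system: for every vertex $k$, $\sum_{j\,:\,\{k,j\}\in E}\varepsilon_{kj}\,c_{\{k,j\}}\,u_j = 0$ in $K^2$, where $\varepsilon_{kj} = \pm 1$ records the orientation. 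Since the $t_j$ are distinct, any two distinct vectors $u_j$ are linearly independent over $K$. Now peel: an isolated vertex gives no condition, and a leaf $k$ with neighbor $u$ gives the one-term relation $\varepsilon_{ku}c_{\{k,u\}}u_u = 0$, forcing $c_{\{k,u\}} = 0$; deleting such vertices repeatedly reduces $G$ to its $2$-core, and because $G$ is a pseudo-forest this $2$-core is a disjoint union of cycles. On a cycle $v_1v_2\cdots v_m$ with $m\ge 3$, the relation at $v_i$ has the form $\varepsilon\,c_{\{v_{i-1},v_i\}}u_{v_{i-1}} + \varepsilon'\,c_{\{v_i,v_{i+1}\}}u_{v_{i+1}} = 0$ with $v_{i-1}\ne v_{i+1}$; as $u_{v_{i-1}}, u_{v_{i+1}}$ are independent, $c_{\{v_{i-1},v_i\}} = c_{\{v_i,v_{i+1}\}} = 0$. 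Hence every $c_e = 0$, so $\operatorname{rank}\mathcal J = |E|$ and $\ell(J_G) = |E|$; forests (empty $2$-core) and unicyclic graphs ($2$-core a single cycle) are pseudo-forests, which yields the final sentence.

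The step I expect to demand the most care — and the only place where being a pseudo-forest is essential — is passing to the $2$-core and handling it. The argument works because in a pseudo-forest every $2$-core vertex has degree exactly $2$, so the local relation there involves only two of the vectors $u_j$, automatically independent in $K^2$; at a degree-$\ge 3$ vertex this fails (three vectors in $K^2$ are dependent), consistent with the strict inequality $\ell(J_{K_n}) < |E|$ for $n\ge 4$. A minor technical wrinkle is the positive-characteristic form of the Jacobian criterion, dealt with by the perfect-closure reduction noted above.
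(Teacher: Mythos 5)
Your proof is correct, and it takes a genuinely different route from the paper's. Both arguments start from the same reduction---$\ell(J_G)=\operatorname{trdeg}_K K(f_e : e\in E)\le |E|$, with equality exactly when the binomials are algebraically independent (Corollary~\ref{intro-spread})---but the paper then proceeds inductively and combinatorially: it proves additivity over connected components (Theorem~\ref{thm-components}), shows by a direct transcendence-base computation that attaching a leaf raises $\ell$ by exactly one (Proposition~\ref{corr-leaf}), deduces the tree case (Corollary~\ref{corr-tree}), and handles the cycle by importing $\operatorname{ara}(J_{C_n})=n$ from the literature and squeezing $\operatorname{ara}(J_{C_n})\le\ell(J_{C_n})\le\mu(J_{C_n})$ (Lemma~\ref{lem-cycle}, Theorem~\ref{thm-uni}). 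You replace all of this with one uniform Jacobian rank computation: the specialization $x_i\mapsto 1$, $y_i\mapsto t_i$ converts the question into linear algebra for the vectors $u_j=(t_j,-1)$, and leaf-peeling down to the $2$-core---which for a pseudo-forest is a disjoint union of cycles, so every residual local relation involves only two of the pairwise independent $u_j$---forces all coefficients to vanish. I checked the details: the block description of the specialized rows, the two-by-two determinants $t_{j'}-t_j\ne 0$, the peeling (a coefficient already shown to be zero drops out of the conditions at the other endpoint), and the positive-characteristic form of the criterion (rank of the Jacobian bounds the transcendence degree over any field, with your perfect-closure remark covering non-perfect $K$; harmless here since the $f_e$ have coefficients in the prime field) are all sound. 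What your approach buys: it is self-contained (no citation for the arithmetic rank of $J_{C_n}$), it treats disconnected graphs without a separate additivity lemma, and it pinpoints why pseudo-forests are the natural boundary, since a core vertex of degree at least $3$ forces a dependence among three vectors in $K^2$, consistent with $\ell(J_{K_n})=2n-3<|E|$. What the paper's route buys: the intermediate results (component additivity and the leaf formula) are of independent interest and are reused elsewhere, and the argument stays entirely within transcendence-base manipulations with no characteristic caveats.
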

\noindent
The main novelty of the proof of Theorem~\ref{thm-main} is the reduction to the case of connected graphs and the use of an additivity formula
that we prove in Theorem \ref{thm-components}:
\[
\ell(J_G) = \sum_{i=1}^r \ell(J_{G_i}),
\]  
where $G_1, \ldots, G_r$ are the connected components of $G$.

For closed graphs, we further present a geometric perspective by 
relating $\ell(J_G)$ to the \emph{Newton--Okounkov region} associated 
to $J_G$, thus contributing to the broader program
(see, for example \cite{Cut1,KK,LM,CL})
studying these regions.
Our Theorem \ref{thm-closed1} shows that 
$\ell(J_G)$, when $G$ is closed, equals the maximal dimension of a compact face of this convex region, drawing connections between algebraic invariants and convex geometry. Finally, in Corollary~\ref{cor: Pluck and bound} we recover a result of Kumar \cite{kumar2022rees} that a closed graph $G$ satisfies $\ell(J_G) = |E| = \mu(J_G)$ if and only if $G$ is $K_4$-free.

\noindent\textbf{Structure of the paper.} 
Section~\ref{sec-2} reviews the definitions of binomial edge ideals 
and analytic spread, along with relevant tools from transcendence base theory. 
Section~\ref{sec-3} develops general properties and bounds on $\ell(J_G)$. 
In Section~\ref{sec-4}, we prove Theorem~\ref{bounds}.
Section~\ref{sec-exact} focuses on exact computations for $\ell(J_G)$ from some 
families of graphs. In Section~\ref{sec-6}, we restrict to the case that
$G$ is a closed graph.  We present a connection between $\ell(J_G)$ 
and the Newton--Okounkov region of $J_G$ and classify the closed graphs
with $\ell(J_G) = \mu(J_G)$.

\section{Preliminaries}\label{sec-2}

In this section we fix notation and recall basic terminology.
Throughout this paper, let $K$ be an 
infinite field, let $R = K[x_1,\ldots,x_n]$ 
be the polynomial ring in $n$ variables, and let 
$S = K[x_1,\ldots,x_n,y_1,\ldots,y_n]$ be the polynomial ring 
in $2n$ variables.  Note that while some of our results
do not require $K$ to be infinite, some of our 
key tools, e.g., Remark \ref{rmk-ara}, require
this hypothesis, so we have elected to make
this global assumption on $K$.

All the graphs considered in this paper will be finite and simple. 
If $G = (V,E)$ is a graph on $n$ vertices, we will denote the vertices 
by the positive integers $1,\ldots,n$, as in $V = [n]:= \{1,\ldots,n\}$. For $n\geq 1$, the \textit{cycle} $C_n$ is the graph $C_n = ([n],E)$ whose edges are $E = \{\{1,2\},\{2,3\},\ldots,\{n-1,n\},\{n,1\}\}$. For a graph $G = (V,E)$, a \textit{subgraph} of $G$ is a graph $H = (V_1,E_1)$ such that $V_1\subset V$ and $E_1\subset E$. A subgraph $H = (V_1,E_1)$ of $G = (V,E)$ is called \textit{induced} if $E_1 = E\cap (V_1\times V_1)$. A graph $G$ is called a \textit{tree} if it does not contain any cycles. The binomial ideal
\begin{equation*}
    (x_iy_j-x_jy_i\mid \{i,j\}\in E)\subset S
\end{equation*}
is called the {\it binomial edge ideal} of the graph $G$, 
and is denoted by $J_G$.

In this paper we compute analytic spread of binomial edge ideals. The definition of the analytic spread of an ideal in a polynomial ring is as follows.

\begin{definition}\label{Def: Rees Alg/fib cone}
     Let $\mathfrak{m} = (x_1,\ldots,x_n)\subset R$, and let $I\subset R$ be an ideal. We define the \textit{Rees algebra} of $I$ to be
    \begin{equation*}
        R[It] = \bigoplus_{n\geq 0}I^nt^n.
    \end{equation*}
   The \textit{special fiber ring} of $I$ is $F(I) = R[It]/\mathfrak{m}R[It]$. The \textit{analytic spread} $\ell(I)$ of $I$ is defined to be $\dim F(I)$.
\end{definition}

  The next lemma about the analytic spread will be utilized in Sections \ref{sec-3} and \ref{sec-4}.

   \begin{lemma}[{\cite[Proposition 10.3.2]{MR2724673}}]\label{lem-spread}
       If $I\subset R$ is a graded ideal, then 
           \begin{equation*}
               \lim_{k\to\infty} {\rm depth} (R/I^k)
               \leq \dim(R) -\ell(I).
           \end{equation*}
   \end{lemma}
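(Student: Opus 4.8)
The plan is to obtain this from the classical inequality of Burch, combined with Brodmann's theorem that $\depth(R/I^k)$ stabilizes; in fact the statement is literally \cite[Proposition~10.3.2]{MR2724673}, so one may simply cite it. Here is the argument I would write out. Put $d=\dim R$ and $\mathfrak{m}=(x_1,\dots,x_n)$, and assume $I$ is a nonzero proper ideal of $R$ (otherwise the claim is vacuous or trivial). Consider the Rees algebra $\mathcal{R}=\mathcal{R}(I)=\bigoplus_{k\ge 0}I^kt^k\subseteq R[t]$. Being a finitely generated $K$-domain, $\mathcal{R}$ is catenary and equidimensional with $\dim\mathcal{R}=d+1$; since $\mathcal{R}/\mathfrak{m}\mathcal{R}=F(I)$ has dimension $\ell(I)$ by definition, this gives $\htt(\mathfrak{m}\mathcal{R})=(d+1)-\ell(I)$, and as grade never exceeds height we get $\operatorname{grade}(\mathfrak{m}\mathcal{R},\mathcal{R})\le (d+1)-\ell(I)$.

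Next I would rewrite $\operatorname{grade}(\mathfrak{m}\mathcal{R},\mathcal{R})$ in terms of the depths $\depth(R/I^k)$. Since $\mathfrak{m}\mathcal{R}$ is generated by $x_1,\dots,x_n\in R=\mathcal{R}_0$, the \v{C}ech complex on these elements splits along the $t$-grading of $\mathcal{R}$, so $H^j_{\mathfrak{m}\mathcal{R}}(\mathcal{R})=\bigoplus_{k\ge 0}H^j_{\mathfrak{m}}(I^k)$ for all $j$, whence $\operatorname{grade}(\mathfrak{m}\mathcal{R},\mathcal{R})=\min_{k\ge 0}\depth_{\mathfrak{m}}(I^k)$. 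Feeding the exact sequences $0\to I^k\to R\to R/I^k\to 0$ into local cohomology and using $H^j_{\mathfrak{m}}(R)=0$ for $j<d$ ($R$ being Cohen--Macaulay of dimension $d$) gives $\depth_{\mathfrak{m}}(I^k)=\depth(R/I^k)+1$ for $k\ge 1$, while $k=0$ contributes $d$, which is never the minimum because $\depth(R/I)\le\dim(R/I)<d$. Hence
\[
\operatorname{grade}(\mathfrak{m}\mathcal{R},\mathcal{R})=1+\min_{k\ge 1}\depth(R/I^k),
\]
and comparing with the height bound yields Burch's inequality $\min_{k\ge 1}\depth(R/I^k)\le d-\ell(I)$.

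Finally I would upgrade the minimum to the limit. By Brodmann's theorem there is an $N$ with $\depth(R/I^k)=t:=\lim_{k\to\infty}\depth(R/I^k)$ for all $k\ge N$. Applying the inequality already proved to $J:=I^N$, whose powers are $J^k=I^{Nk}$ with $Nk\ge N$, we get $\depth(R/J^k)=t$ for all $k\ge 1$; moreover $\ell(J)=\ell(I)$, since $F(I^N)$ is the $N$-th Veronese subalgebra of $F(I)$ and therefore has the same Krull dimension. Thus $t=\min_{k\ge 1}\depth(R/J^k)\le d-\ell(J)=\dim(R)-\ell(I)$, which is exactly the asserted inequality.

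The step that requires the most care is the graded local cohomology bookkeeping---the identification $H^j_{\mathfrak{m}\mathcal{R}}(\mathcal{R})=\bigoplus_{k}H^j_{\mathfrak{m}}(I^k)$ together with the passage from $H^j_{\mathfrak{m}}(I^k)$ to $\depth(R/I^k)$ via the above exact sequences. Once that is set up, the height inequality for $\operatorname{grade}(\mathfrak{m}\mathcal{R},\mathcal{R})$ and the final Veronese/Brodmann step are routine; the only substantial external input is Brodmann's stabilization theorem.
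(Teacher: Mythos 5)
The paper gives no proof of this lemma---it is quoted directly from the cited source (Herzog--Hibi, Proposition 10.3.2)---and your argument is a correct, self-contained reconstruction of the standard proof of exactly that result: Burch's inequality via $\operatorname{grade}(\mathfrak{m}\mathcal{R},\mathcal{R})\le\operatorname{ht}(\mathfrak{m}\mathcal{R})$ and the graded splitting of local cohomology of the Rees algebra, upgraded from a minimum to a limit by Brodmann's stabilization together with the Veronese identification $F(I^N)\subseteq F(I)$. All the steps check out (the only cosmetic quibble is that the $k=0$ summand can tie with, rather than strictly exceed, the minimum, which does not affect the conclusion), so there is nothing to correct.
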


   In Section \ref{sec-3} we will prove that if $I\subset R$ is an ideal which is generated by forms of the same degree, then it follows from Lemma \ref{intro-spread} that $\ell(I)\leq \mu(I)$, the
   number of minimal generators of $I$.
   In particular, $\ell(J_G)\leq \mu(J_G) = |E|$ for any graph $G$, thus
   providing the upper bound for any graph.  A lower
   bound for $\ell(I)$ comes from the arithmetical rank.

\begin{definition}
    For an ideal $I$ in $R$, the {\it arithmetical rank of $I$}, 
    denoted $\operatorname{ara}(I)$, is defined as
    \begin{equation*}
        \min \{k\mid \text{ there exists } f_1,\ldots,f_k \text{ where } 
        \sqrt{I}=\sqrt{(f_1,\ldots,f_k}\},
    \end{equation*}
    where $\sqrt{I}$ is the radical of $I$. The arithmetical rank of 
    an ideal is the minimal number of the generators of $I$ up to radical.
\end{definition}

In Section \ref{sec-4} we make use of the following lower bound on the 
analytic spread. 

\begin{remark}\label{rmk-ara}
For any ideal $J$ in $R$,  the inequality
    \begin{equation}
        \operatorname{ara}(J) \leq \ell(J).
    \end{equation}
holds. This inequality for a local ring $R$ follows from 
\cite[Prop. 8.3.8]{SH}, but the proof extends naturally to the graded case. 
\end{remark}

In order to compute the analytic spread of binomial edge ideals in this paper, 
we compute the dimension of the special fiber ring of the ideal directly,
using transcendence base theory. We proceed by recalling some 
basic theory of transcendence bases.

\begin{definition}
    Suppose that $K\subset L$ is a field extension. If $E\subset L$, 
    then we say that the set $E$ is {\it algebraically independent} 
    over $K$ if the following holds:
    \begin{equation*}
        \begin{split}
            &\text{    for $r\geq 1$, $f_1,\ldots,f_r\in E$ and polynomials $G(X_1,\ldots,X_r)$ over $K$ such that}
            \\&\text{$G(f_1,\ldots,f_r)=0$, we have that $G(X_1,\ldots,X_r)$ is the zero polynomial. }
        \end{split}
    \end{equation*}
There exists a maximal set $B\subset L$ with respect to containment, which is algebraically independent over $K$, and all such sets have the same cardinality. A maximal set $B\subset L$ which is algebraically independent over $K$ is called a \textit{transcendence base} of $L$ over $K$. The \textit{transcendence degree} of $L$ over $K$, denoted $\trdeg_K L$, is defined to be the cardinality of any transcendence base of $L$ over $K$.
\end{definition}

We will make repeated use of the following important fact in Section~\ref{sec-3}.

\begin{theorem}[{\cite[Chapter 8, Theorem 1.1]{Lang}}]\label{thm-transbase}
    Let $K\subset L$ be fields. Any two transcendence bases of $L$ over $K$ have the same cardinality. If $\Gamma$ is a subset of $L$ such that $L$ is algebraic over $K(\Gamma)$, and $S$ is a subset of $\Gamma$ which is algebraically independent over $K$, then there exists a transcendence base $\mathscr{B}$ of $L$ over $K$ such that $S\subset \mathscr{B}\subset \Gamma$.
\end{theorem}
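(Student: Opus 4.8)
The plan is to deduce both assertions from the single fact that \emph{algebraic dependence over $K$} is an abstract (finitary) dependence relation on $L$, so that the two statements become the standard ``all bases have the same cardinality'' and ``exchange/refinement'' theorems for such relations. Concretely, say $x\in L$ \emph{depends on} $E\subseteq L$ if $x$ is algebraic over $K(E)$. I would first record two properties. (i) \emph{Transitivity}: if every element of $\Gamma$ depends on $\Delta$ and $x$ depends on $\Gamma$, then $x$ depends on $\Delta$; this is just the fact that a tower of algebraic extensions is algebraic. (ii) \emph{Exchange}: if $x$ depends on $E\cup\{y\}$ but not on $E$, then $y$ depends on $E\cup\{x\}$. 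For (ii) take a nonzero $P\in K(E)[X,Y]$ with $P(x,y)=0$; since $x$ is transcendental over $K(E)$, writing $P(x,Y)=\sum_j c_j(x)Y^j$ with $c_j(x)\in K(E)[x]$, transcendence of $x$ over $K(E)$ guarantees some $c_j(x)\neq 0$, so $y$ is a root of a nonzero polynomial over $K(E\cup\{x\})$.

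Granting (i) and (ii), I would prove the refinement statement as follows. Since $S\subseteq\Gamma$ is algebraically independent over $K$, Zorn's lemma (algebraicity has finite character, so the union of a chain of independent sets is independent) yields a subset $\mathscr{B}$ of $\Gamma$ maximal among algebraically independent subsets of $\Gamma$ that contain $S$; thus $S\subseteq\mathscr{B}\subseteq\Gamma$. I claim every $\gamma\in\Gamma$ depends on $\mathscr{B}$: otherwise a short argument straight from the definition of algebraic independence shows $\mathscr{B}\cup\{\gamma\}$ is still algebraically independent over $K$, contradicting maximality. Since $L$ is algebraic over $K(\Gamma)$, every element of $L$ depends on $\Gamma$, and combining this with the claim and transitivity (i), every element of $L$ depends on $\mathscr{B}$. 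As $\mathscr{B}$ is itself algebraically independent over $K$ and no element of $L\setminus\mathscr{B}$ can be adjoined while preserving independence, $\mathscr{B}$ is a transcendence base of $L$ over $K$ with $S\subseteq\mathscr{B}\subseteq\Gamma$.

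For the cardinality statement, let $\mathscr{B}$ and $\mathscr{B}'$ be transcendence bases. If both are finite, this is the classical Steinitz exchange: using (ii) one swaps the elements of $\mathscr{B}$ into $\mathscr{B}'$ one at a time, each swap preserving an algebraically independent set over whose field $L$ is algebraic, which forces $|\mathscr{B}|\le|\mathscr{B}'|$, and symmetry gives equality. If $\mathscr{B}$ is infinite, each $b\in\mathscr{B}$ is algebraic over $K(F_b)$ for some finite $F_b\subseteq\mathscr{B}'$ (algebraicity over $K(\mathscr{B}')$ involves only finitely many elements of $\mathscr{B}'$); put $\Gamma_0=\bigcup_{b\in\mathscr{B}}F_b\subseteq\mathscr{B}'$. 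Then every element of $\mathscr{B}$, hence by transitivity (since $L$ is algebraic over $K(\mathscr{B})$) every element of $L$, depends on $\Gamma_0$. If $\Gamma_0\subsetneq\mathscr{B}'$, any $b'\in\mathscr{B}'\setminus\Gamma_0$ would then be algebraic over $K(\mathscr{B}'\setminus\{b'\})$, contradicting independence of $\mathscr{B}'$; so $\Gamma_0=\mathscr{B}'$ and $|\mathscr{B}'|=|\Gamma_0|\le\aleph_0\cdot|\mathscr{B}|=|\mathscr{B}|$, and symmetry finishes the proof.

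The step I expect to be fussiest is the verification of the exchange property (ii): checking that, after clearing denominators, one genuinely obtains a \emph{nonzero} polynomial in $y$ over $K(E\cup\{x\})$ rather than identically vanishing coefficients. This is precisely where the transcendence of $x$ over $K(E)$ enters, and it is the crux that makes the entire dependence-relation machinery run; everything after that---the Zorn's lemma extension and the Steinitz exchange and cardinality bookkeeping---is routine.
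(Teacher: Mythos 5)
The paper gives no proof of this statement: it is quoted directly from Lang \cite[Chapter 8, Theorem 1.1]{Lang} and used as a black box, so there is nothing in the paper to compare against. Your argument is correct and is essentially the standard (indeed, Lang's own) proof: algebraic dependence over $K$ is an abstract dependence relation satisfying transitivity and the exchange axiom, after which the refinement statement follows from Zorn's lemma (finite character of algebraic independence) and the equicardinality statement from Steinitz exchange together with the finite-support counting argument in the infinite case; your verification of the exchange property, using transcendence of $x$ over $K(E)$ to see that $P(x,Y)$ has a nonvanishing coefficient, is the correct crux. The only point to tighten is the case split at the end: your two cases (``both finite'' and ``$\mathscr{B}$ infinite'') leave the mixed case implicit, but the finite Steinitz argument already shows that if either base is finite then every algebraically independent subset of $L$ has at most that many elements, which rules out one base being finite and the other infinite, and with both infinite your symmetry argument plus Schr\"oder--Bernstein finishes.
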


The following fact from commutative algebra will also be useful for computing analytic spread in Section~\ref{sec-3} and Section~\ref{sec-4}.

\begin{remark}\label{rmk-trdeg}
    Let $A$ be a domain and let $\text{QF}(A)$ be the quotient field of $A$. If $A$ is a finitely generated algebra over $K$, then
\begin{equation*}
    \dim(A) = \trdeg_K \text{QF}(A).
\end{equation*}
\end{remark}

We will use Remark \ref{rmk-trdeg} combined with 
Theorem \ref{thm-kit} given below 
in order to compute analytic spread of binomial edge ideals by directly calculating the transcendence degree of their special fiber rings.

\section{General properties of the analytic spread of a 
binomial edge ideal}\label{sec-3}

In this section, we establish two general results about the analytic spread of 
binomial edge ideals. Theorem~\ref{thm-components} allows us to reduce to the 
case of connected graphs, while Theorem~\ref{thm-add} compares the analytic 
spread of the binomial edge ideal of a graph to that of a subgraph with the 
same number of vertices. These results are stated as follows.

\begin{theorem}\label{thm-components}
    Let $G$ be any graph and let $G_1,\ldots,G_t$ be the connected components of $G$. Then
    \begin{equation*}
        \ell(J_G) = \sum_{i=1}^t\ell(J_{G_i}).
    \end{equation*}
\end{theorem}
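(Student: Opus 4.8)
The key point is that the special fiber ring $F(J_G)$ should decompose as a tensor product over the connected components, because the generators of $J_{G_i}$ involve disjoint sets of variables. Concretely, write $S = K[x_1,\dots,x_n,y_1,\dots,y_n]$ and let $V_i \subset [n]$ be the vertex set of $G_i$, so the $V_i$ partition $[n]$. The generators $x_ay_b - x_by_a$ with $\{a,b\} \in E(G_i)$ only use variables indexed by $V_i$. Thus the fiber cone $F(J_G) = S[J_Gt]/\mathfrak m S[J_Gt]$, which is generated as a $K$-algebra by the images of the generators $f_e = x_ay_b - x_by_a$ (one for each edge $e$), splits as $F(J_G) \cong F(J_{G_1}) \otimes_K \cdots \otimes_K F(J_{G_t})$. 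Then $\dim F(J_G) = \sum_i \dim F(J_{G_i})$, which is exactly the claimed additivity $\ell(J_G) = \sum_i \ell(J_{G_i})$.

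**Steps, in order.**
First I would reduce to the case $t = 2$ by induction, so it suffices to handle $G = G_1 \sqcup G_2$ on disjoint vertex sets. Second, I would identify the special fiber ring explicitly: $F(J_G)$ is the $K$-subalgebra of $S[t]/\mathfrak m S[t] \cong K[t]$… no — more carefully, $F(J_G) = \bigoplus_{d \geq 0} J_G^d / \mathfrak m J_G^d$, and there is a surjection from the polynomial ring $K[T_e : e \in E]$ onto $F(J_G)$ sending $T_e \mapsto \overline{f_e t}$. Call its kernel $P_G$ (the defining ideal of the fiber cone). Third, and this is the crux, I would show $P_G = P_{G_1} S' + P_{G_2} S'$ where $S' = K[T_e : e \in E(G_1)] \otimes_K K[T_e : e \in E(G_2)]$, i.e. that the defining equations of the fiber cone of $J_G$ are exactly the union of those for $J_{G_1}$ and $J_{G_2}$. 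This follows because a product of generators $\prod_{e} f_e^{a_e}$ lands in $\mathfrak m J_G^d$ if and only if, after separating the $V_1$-variables from the $V_2$-variables, the corresponding separate products land in the respective $\mathfrak m_1 J_{G_i}^{d_i}$; here one uses that $S = S_1 \otimes_K S_2$ with $S_i$ the polynomial ring in the variables of $V_i$, that $\mathfrak m = \mathfrak m_1 S + \mathfrak m_2 S$, and that the monomials in disjoint variable sets multiply freely. Once $F(J_G) \cong F(J_{G_1}) \otimes_K F(J_{G_2})$ is established, apply the standard fact that $\dim(A \otimes_K B) = \dim A + \dim B$ for finitely generated $K$-algebras (e.g. via Remark~\ref{rmk-trdeg}, since transcendence degree is additive in this situation once one passes to the quotient by a minimal prime of each factor).

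**The main obstacle.**
The delicate step is the third one: proving that no "mixed" relations arise among the $T_e$ beyond those internal to each component, i.e. that the fiber cone genuinely tensors rather than merely surjects onto the tensor product from $F(J_{G_1}) \otimes_K F(J_{G_2})$ (which is automatic) or receives a surjection from it. The cleanest route is probably to work with the bigraded structure: $J_G^d$ is generated by products $\prod_e f_e^{a_e}$ with $\sum a_e = d$, and such a product, being a polynomial in the $x_a, y_a$, has a natural bidegree $(d, d)$ that further refines according to how many factors come from $E(G_1)$ versus $E(G_2)$; the $V_1$- and $V_2$-variables are algebraically independent, so a $K$-linear relation among these products modulo $\mathfrak m J_G^d$ must respect this splitting. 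Alternatively, one can invoke that $R[J_Gt] \cong R_1[J_{G_1}t] \otimes_K R_2[J_{G_2}t]$ as Rees algebras (since the ideals live in a tensor product of polynomial rings in disjoint variables), from which the fiber cone statement follows by tensoring down modulo the maximal ideals — but one must be careful that $\mathfrak m R[J_Gt]$ corresponds correctly to $\mathfrak m_1 \otimes + \otimes \mathfrak m_2$ under this identification. I expect the bigrading argument to be the least error-prone, and I would write the proof that way.
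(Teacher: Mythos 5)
Your main route is correct, but it is packaged differently from the paper's. The paper does not decompose the fiber cone as a tensor product; instead it uses Theorem~\ref{thm-kit} to identify $F(J_G)$ with the subalgebra $K[f_e : e \in E]\subseteq S$, reduces $\ell$ to a transcendence degree (Corollary~\ref{intro-spread}), and then proves exactly one lemma (Lemma~\ref{lem-disjoint}): algebraically independent forms in disjoint sets of variables remain jointly algebraically independent. Concatenating transcendence bases of the components then gives a transcendence base of the whole (Corollary~\ref{corr-3.6}), and the theorem is a one-line consequence. Your ``no mixed relations'' crux is precisely Lemma~\ref{lem-disjoint} in disguise, but the paper only needs it for a transcendence base inside each component, not for all generators; that is why it never has to analyze the presentation ideal $P_G$ or the full bigraded structure. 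Your tensor decomposition $F(J_G)\cong\bigotimes_i F(J_{G_i})$ is a genuinely stronger, more structural statement, and the cleanest way to get it is not the bigrading of $P_G$ but the observation that, via Theorem~\ref{thm-kit}, each $F(J_{G_i})$ is a subalgebra $A_i\subseteq S_i$, and $A_1\otimes_K\cdots\otimes_K A_t\to S_1\otimes_K\cdots\otimes_K S_t=S$ is injective because tensoring injections over a field preserves injectivity. One advantage of your route: it avoids the specialization argument in Lemma~\ref{lem-disjoint} that uses the infinitude of $K$.

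One concrete warning about your flagged alternative: the claim $R[J_Gt]\cong R_1[J_{G_1}t]\otimes_K R_2[J_{G_2}t]$ is false. The degree-$d$ piece of the tensor product is the external direct sum $\bigoplus_{a+b=d}J_{G_1}^aJ_{G_2}^b$, whereas $J_G^d=\sum_{a+b=d}J_{G_1}^aJ_{G_2}^b$ is not a direct sum inside $S$ (already for $I_1=(x)$, $I_2=(y)$ one has $(x^2)\cap(xy)\ne 0$), so the natural surjection has a kernel. The fiber cone tensors even though the Rees algebra does not; you were right to prefer the other argument, but that alternative should be deleted rather than merely treated with care.
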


\begin{theorem}\label{thm-add}
    If $G\subset G'$ are graphs on $n$ vertices, then
    \begin{equation*}
        \ell(J_G)\leq \ell(J_{G'}).
    \end{equation*}
\end{theorem}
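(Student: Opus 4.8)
\textbf{Proof proposal for Theorem~\ref{thm-add}.}
The plan is to realize the special fiber ring of a binomial edge ideal as a concrete subalgebra of $S$ and then observe that passing from $G'$ to a subgraph $G$ on the same vertex set merely deletes generators of this subalgebra, which can only decrease its Krull dimension. Since $J_G$ is generated by the quadrics $x_iy_j-x_jy_i$ with $\{i,j\}\in E(G)$ --- forms all of the same degree --- Theorem~\ref{thm-kit} identifies the special fiber ring with the $K$-subalgebra
\[
F(J_G)\;\cong\;K\bigl[\,x_iy_j-x_jy_i \;:\; \{i,j\}\in E(G)\,\bigr]\;\subseteq\;S .
\]
In particular $F(J_G)$ is a finitely generated $K$-algebra which is a domain, being a subring of the polynomial ring $S$, so Remark~\ref{rmk-trdeg} gives $\ell(J_G)=\dim F(J_G)=\trdeg_K \mathrm{QF}\bigl(F(J_G)\bigr)$, where the quotient field is formed inside $\mathrm{QF}(S)=K(x_1,\dots,x_n,y_1,\dots,y_n)$.

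Now assume $G\subseteq G'$ are graphs on the same vertex set $[n]$, so that they share the ambient ring $S$ and $E(G)\subseteq E(G')$. Then the generating set of $F(J_G)$ is contained in that of $F(J_{G'})$, hence $F(J_G)\subseteq F(J_{G'})$ as subrings of $S$, and consequently $\mathrm{QF}(F(J_G))\subseteq \mathrm{QF}(F(J_{G'}))$ inside $\mathrm{QF}(S)$. Transcendence degree over $K$ is monotone along such an inclusion: any subset of $\mathrm{QF}(F(J_G))$ that is algebraically independent over $K$ remains algebraically independent over $K$ inside $\mathrm{QF}(F(J_{G'}))$, and extends to a transcendence base of the latter by Theorem~\ref{thm-transbase}. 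Therefore
\[
\ell(J_G)=\trdeg_K \mathrm{QF}\bigl(F(J_G)\bigr)\;\le\;\trdeg_K \mathrm{QF}\bigl(F(J_{G'})\bigr)=\ell(J_{G'}),
\]
which is the assertion of Theorem~\ref{thm-add}.

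The one step deserving attention is the identification $F(J_G)\cong K[\,x_iy_j-x_jy_i:\{i,j\}\in E(G)\,]$, that is, that the relations defining the special fiber ring coincide with the polynomial relations among the binomials themselves; this is exactly where equigeneration enters. Writing $\mathfrak m\subset S$ for the irrelevant ideal, the degree-$k$ graded component of $F(J_G)$ is $J_G^k/\mathfrak m J_G^k$, and since $J_G^k$ is generated in degree $2k$ while $\mathfrak m J_G^k$ lives in degrees $\ge 2k+1$, a $K$-linear combination of $k$-fold products of the binomials maps to zero in $F(J_G)$ if and only if the same combination vanishes in $S$; homogeneity then upgrades this to an equality of defining ideals. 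Granting this (the content of Theorem~\ref{thm-kit}), the remainder is the elementary monotonicity of transcendence degree, so I do not anticipate a genuine obstacle --- the only thing to keep track of is that $G$ and $G'$ have the same vertex set, which is built into the hypothesis and guarantees that both fiber rings sit inside a common polynomial ring.
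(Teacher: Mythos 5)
Your proposal is correct and follows essentially the same route as the paper: both identify the special fiber ring with the subalgebra generated by the edge binomials via Theorem~\ref{thm-kit}, reduce $\ell(J_G)$ to a transcendence degree via Corollary~\ref{intro-spread} and Remark~\ref{rmk-trdeg}, and conclude by extending a transcendence base of the smaller field to one of the larger using Theorem~\ref{thm-transbase}. The only cosmetic difference is that the paper phrases the extension step by explicitly choosing the base inside the generating set $\{F_1,\dots,F_r\}$, whereas you phrase it as monotonicity of $\trdeg_K$ along the inclusion of quotient fields.
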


We work toward the proof of Theorems~\ref{thm-components} and \ref{thm-add} 
by first studying transcendence bases of special fiber rings from a very broad 
perspective.  We first state a porism of the proof of \cite[Proposition 4.8]{HeinzerKim} and a corollary that reduces the computation of
the analytic spread to finding a transcendence base.

\begin{theorem}\label{thm-kit}
    Let $I\subset R$ be a homogeneous ideal generated by forms $F_1,\ldots,F_r$ of the same degree. Then $F(I)$ is isomorphic as rings to the subring $K[F_1,\ldots,F_r]$ of $R$, and in particular, $F(I)$ is a domain.
\end{theorem}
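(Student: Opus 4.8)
The plan is to show that the natural surjection from the polynomial ring on $r$ variables onto $F(I)$ factors through $K[F_1,\ldots,F_r]\subset R$, and that the induced map $K[F_1,\ldots,F_r]\to F(I)$ is an isomorphism. First I would recall the two standard presentations. On the one hand, the special fiber ring $F(I) = R[It]/\mathfrak{m}R[It]$ is a quotient of $R[It]$, and since $I$ is generated by the forms $F_1,\ldots,F_r$ of common degree $d$, the Rees algebra $R[It]$ is generated over $R$ by $F_1t,\ldots,F_rt$, so $F(I)$ is generated over $K = R/\mathfrak{m}$ by the images of $F_1t,\ldots,F_rt$. Thus there is a surjection $\varphi\colon K[T_1,\ldots,T_r]\twoheadrightarrow F(I)$, $T_i\mapsto \overline{F_it}$. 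On the other hand, sending $T_i\mapsto F_i\in R$ gives a surjection $\psi\colon K[T_1,\ldots,T_r]\twoheadrightarrow K[F_1,\ldots,F_r]$. The claim $F(I)\cong K[F_1,\ldots,F_r]$ amounts to showing $\ker\varphi = \ker\psi$.

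The key step is to identify both kernels explicitly. A polynomial $H(T_1,\ldots,T_r)$ lies in $\ker\psi$ precisely when $H(F_1,\ldots,F_r) = 0$ in $R$. For $\ker\varphi$, one uses the standard fact (see e.g. \cite[Chapter 5]{SH}, or a direct degree-counting argument) that the defining ideal of the Rees algebra, intersected with $K[T_1,\ldots,T_r]$ and then reduced mod $\mathfrak{m}$, is generated by those polynomial relations among the $F_it$ with coefficients in $R$ that, upon collecting, have all coefficients in $\mathfrak{m}$ contributing nothing — more precisely, the fiber cone relations are exactly the \emph{homogeneous} polynomial relations $H$ (with coefficients in $K$) such that $H(F_1,\ldots,F_r)=0$. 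The point is that because all $F_i$ have the same degree $d$, any polynomial relation $\sum a_\alpha F^\alpha = 0$ with $a_\alpha\in R$ splits into graded pieces: collecting the terms where $a_\alpha$ is a scalar (degree $0$) gives a relation contributing to the fiber, and terms with $a_\alpha\in\mathfrak{m}$ die in $F(I)$. This homogeneity is what forces $\ker\varphi$ to consist of exactly the $K$-coefficient relations among the $F_i$, i.e.\ $\ker\varphi = \ker\psi$. Since $K[F_1,\ldots,F_r]$ is a subring of the domain $R$, it is a domain, and hence so is $F(I)$.

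I expect the main obstacle to be making the identification $\ker\varphi = \ker\psi$ fully rigorous: one must argue carefully that no ``mixed-degree'' relation among the $F_it$ over $R$ can produce, after reduction modulo $\mathfrak{m}$, a relation in $F(I)$ that is not already a $K$-linear consequence of the degree-homogeneous relations among the $F_i$ in $R$. The equal-degree hypothesis is essential here — it is exactly what ensures $R[It]$ is generated in degree one over $R$ with $It$ a graded $R$-module in a way compatible with the $\mathbb{Z}$-grading of $R$, so that reducing mod $\mathfrak{m}$ and taking relations commute appropriately. Once this is in place, the isomorphism $F(I)\cong K[F_1,\ldots,F_r]$ and the domain conclusion are immediate. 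As the statement indicates, this is essentially a porism extracted from the proof of \cite[Proposition 4.8]{HeinzerKim}, so I would also cite that argument directly and simply indicate how it yields the stated conclusion in this generality.
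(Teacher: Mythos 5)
Your proposal is correct and follows essentially the same route as the paper: the paper reduces $F(I)$ to $\bigoplus_{k\ge 0} I^k/\mathfrak{m}I^k$ and then cites the computation in the proof of \cite[Proposition 4.8]{HeinzerKim}, which is exactly the equal-degree bookkeeping you spell out (namely that $(I^k)_{kd}$ is the $K$-span of the degree-$k$ monomials in $F_1,\ldots,F_r$ while everything of higher internal degree lies in $\mathfrak{m}I^k$, so $I^k/\mathfrak{m}I^k\cong (I^k)_{kd}$ and the graded pieces assemble into the subring $K[F_1,\ldots,F_r]\subset R$). Your identification $\ker\varphi=\ker\psi$ is the presentation-theoretic restatement of that same argument, and your closing appeal to \cite{HeinzerKim} matches the paper's proof.
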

\begin{proof}
    Let  $\mathfrak{m} = (x_1,\ldots,x_n)$ be the graded maximal ideal of $R$. By Definition~\ref{Def: Rees Alg/fib cone}, we have $F(I)\cong \bigoplus_{i\geq 0}I^i/\mathfrak{m}I^i$ as rings. Now the the result immediately follows by the second equation in the proof of \cite[Proposition 4.8]{HeinzerKim}.
\end{proof}

Combining Theorem~\ref{thm-kit} with Remark~\ref{rmk-trdeg} 
gives the following corollary

\begin{corollary}\label{intro-spread}
    Let $I\subset R$ be a homogeneneous
    ideal generated by forms $F_1,\ldots,F_r$ of the same degree. Then.
    \begin{equation*}
        \ell(I) = \trdeg_KK(F_1,\ldots,F_r)
    \end{equation*}which is the cardinality of any transcendence base $\mathcal{B}\subset \{F_1,\ldots,F_r\}$.
\end{corollary}

If $G = (V,E)$ is a finite simple graph, for each
$\{i,j\} \in E$ let $f_{i,j} = x_iy_j -x_jy_i \in S$.  Since
$J_G$ is generated by the forms $f_{i,j}$, which are all
homogeneous of degree two, the previous corollary implies 
computing $\ell(J_G)$ reduces to finding a
transcendence base contained in the set 
$\mathcal{F}:= \{f_{i,j}\mid \{i,j\}\in E\}$; one such procedure
can be found in \cite[Chapter 8, Theorem 1.1]{Lang}.

We will show in Corollary~\ref{corr-3.6} that if $f_1,\ldots,f_r$ are polynomials over $K$ in variables $x_1,\ldots,x_n$ and $g_1,\ldots,g_s$ are polynomials over $K$ in an entirely different set of variables $y_1,\ldots,y_p$, then
\begin{equation}\label{eq: AS break sum}
    \ell((f_1,\ldots,f_r,g_1,\ldots,g_s)) =
    \ell((f_1,\ldots,,f_r))+\ell((g_1,\ldots,g_s)).
\end{equation}
We expect that equation \ref{eq: AS break sum} is well-known but we did not find a proof in the literature, so we included a proof in Corollary~\ref{corr-3.6}. To prove Equation~\ref{eq: AS break sum}, we first establish Lemma~\ref{lem-disjoint}.

\begin{lemma}\label{lem-disjoint}
    Let $n,p,s,r$ be positive integers and let $A = K[x_1,\ldots,x_n,y_1,\ldots,y_p]$ be a polynomial ring over $K$. Let $g_1,\ldots,g_s\in A_1 := K[x_1,\ldots, x_n]$ be algebraically independent forms of the same degree $d$ over $K$, and let $f_1,\ldots,f_r\in A_2:= K[y_1,\ldots,y_p]$ be algebraically independent algebraically independent forms of degree $d$ over $K$. Then the set $\{g_1,\ldots,g_s,f_1,\ldots,f_r\}$ is still algebraically independent over $K$
    \end{lemma}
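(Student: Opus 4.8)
The plan is to argue that a nontrivial algebraic relation among $\{g_1,\ldots,g_s,f_1,\ldots,f_r\}$ over $K$ would force a nontrivial relation among the $g_i$'s alone (or among the $f_j$'s alone), contradicting the hypotheses. The key structural fact I would exploit is that $A = A_1 \otimes_K A_2$ as $K$-algebras, where $A_1 = K[x_1,\ldots,x_n]$ and $A_2 = K[y_1,\ldots,y_p]$ involve disjoint variable sets, so there is no "interaction" between the two groups of polynomials beyond what $K$ already provides.

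Here is how I would carry it out. Suppose $G(T_1,\ldots,T_s,U_1,\ldots,U_r)$ is a polynomial over $K$ with $G(g_1,\ldots,g_s,f_1,\ldots,f_r) = 0$ in $A$; I must show $G$ is the zero polynomial. Write $G = \sum_{\alpha} P_\alpha(U_1,\ldots,U_r)\, T^\alpha$, grouping by monomials $T^\alpha$ in the first block of variables, where each $P_\alpha \in K[U_1,\ldots,U_r]$. Substituting gives $\sum_\alpha P_\alpha(f_1,\ldots,f_r)\, g^\alpha = 0$, an equation in $A = A_1 \otimes_K A_2$. Now the crucial step: since $f_1,\ldots,f_r$ are algebraically independent over $K$, the subring $K[f_1,\ldots,f_r] \subseteq A_2$ is a polynomial ring in $r$ variables, and the elements $P_\alpha(f_1,\ldots,f_r) \in A_2$ are linearly independent over $K$ precisely when the polynomials $P_\alpha \in K[U_1,\ldots,U_r]$ are. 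So I would instead expand $G = \sum_\beta Q_\beta(T_1,\ldots,T_s)\, U^\beta$ by monomials in the $U$-block, yielding $\sum_\beta Q_\beta(g_1,\ldots,g_s)\, f^\beta = 0$. The monomials $f^\beta = f_1^{\beta_1}\cdots f_r^{\beta_r} \in A_2$ are $K$-linearly independent (by algebraic independence of the $f_j$), and each coefficient $Q_\beta(g_1,\ldots,g_s)$ lies in $A_1$. Since $A = A_1 \otimes_K A_2$ and the $f^\beta$ are part of a $K$-basis of $A_2$ (extend to one), the vanishing of $\sum_\beta Q_\beta(g_1,\ldots,g_s) f^\beta$ forces $Q_\beta(g_1,\ldots,g_s) = 0$ in $A_1$ for every $\beta$. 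Finally, algebraic independence of $g_1,\ldots,g_s$ over $K$ gives $Q_\beta = 0$ as a polynomial for all $\beta$, hence $G = 0$.

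The homogeneity and equal-degree hypotheses are not actually needed for this argument — it works for arbitrary algebraically independent families in disjoint variable sets — but stating it in the graded form is harmless and matches the intended application. The one point requiring a little care is the linear-independence claim: that a $K$-linearly independent subset $\{f^\beta\}$ of $A_2$ remains linearly independent over the coefficient ring $A_1$ inside $A_1 \otimes_K A_2$. This is just flatness of $A_1$ over $K$ (every vector space is flat), i.e. tensoring the short exact sequence expressing linear independence with $A_1$ stays exact; I would phrase it directly via the tensor decomposition $A \cong A_1 \otimes_K A_2$ and the fact that $A_1 \otimes_K (-)$ preserves linear independence of a set that extends to a $K$-basis. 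I do not anticipate a genuine obstacle here — the main (minor) subtlety is simply organizing the bookkeeping of which block of variables to expand in, and being explicit that "disjoint variables" is what licenses the tensor-product factorization.
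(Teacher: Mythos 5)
Your proof is correct, but it takes a genuinely different route from the paper's. The paper argues by contradiction and specialization: writing a putative relation $P$ as a polynomial in the last $s$ variables with coefficients $G_{i_1,\ldots,i_s}(X_1,\ldots,X_r)$, it uses algebraic independence of the $f_j$ to find a nonvanishing coefficient $G_{j_1,\ldots,j_s}(f_1,\ldots,f_r)\in K[y_1,\ldots,y_p]$, then invokes the infinitude of $K$ to evaluate the $y$-variables at a point $(a_1,\ldots,a_p)$ where that coefficient is nonzero, producing a nontrivial relation $P_1(g_1,\ldots,g_s)=0$ over $K$ and contradicting the independence of the $g_i$. You instead exploit the factorization $A\cong A_1\otimes_K A_2$ directly: expanding the relation as $\sum_\beta Q_\beta(g)\,f^\beta=0$, observing that the monomials $f^\beta$ are $K$-linearly independent in $A_2$ (this is exactly algebraic independence of the $f_j$), and concluding from the tensor decomposition that each coefficient $Q_\beta(g)\in A_1$ vanishes, whence each $Q_\beta=0$. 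Your version buys two things: it works over an arbitrary field (the paper's evaluation step genuinely needs $K$ infinite, since a nonzero polynomial over a finite field can vanish identically on $K^p$), and it makes explicit that neither homogeneity nor the equal-degree hypothesis is used. The paper's version buys a more elementary presentation that avoids any appeal to tensor products or the basis-extension/flatness argument, at the cost of the standing hypothesis that $K$ is infinite (which the paper assumes globally anyway). Both proofs share the same skeleton of reducing a mixed relation to a relation among one block alone; the difference is only in how the coefficients from the other block are killed.
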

  
    \begin{proof}
        Suppose that there exists a polynomial $P\in K[X_1,\ldots,X_{r+s}]$ over $K$ such that $P(f_1,\ldots,f_r,g_1,\ldots,g_s)=0$. We must show that $P=0$. Assume for the sake of contradiction that $P\neq 0$. Then there are polynomials $G_{i_1,\ldots,i_s}(X_1,\ldots,X_r)\in K[X_1,\ldots,X_r]$ such that
        \begin{enumerate}
        \item[(i)] $P = \sum_{(i_1,\ldots,i_s)\in\mathbb{N}^s}G_{i_1,\ldots,i_s}(X_1,\ldots,X_r)X_{r+1}^{i_1}\cdots X_{r+s}^{i_s}$; and
            \item[(ii)] there is an index $(j_1,\ldots,j_s)\in\mathbb{N}^s$ such that $G_{j_1,\ldots,j_s}(X_1,\ldots,X_r)\neq 0$.
        \end{enumerate}
        Let $P_0(X_{r+1},\ldots,X_{r+s})\in A_2[X_{r+1},\ldots,X_{r+s}]$ be the polynomial
        \begin{equation*}
            P_0:= P(f_1,\ldots,f_r,X_{r+1},\ldots,X_{r+s})
            = \sum_{(i_1,\ldots,i_s)\in\mathbb{N}^s}G_{i_1,\ldots,i_s}(f_1,\ldots,f_r)X_{r+1}^{i_1}\cdots X_{r+s}^{i_s}.
        \end{equation*}Since $f_1,\ldots,f_r$ are algebraically independent, it follows that $G_{j_1,\ldots,j_s}(f_1,\ldots,f_r)$ is not the zero polynomial in $K[y_1,\ldots,y_p]$. Thus $P_0(y_1,\ldots,y_p,X_{r+1},\ldots,X_{r+s})$ is not the zero polynomial in $K[y_1,\ldots,y_p,X_{r+1},\ldots,X_{r+s}]$. Then since $K$ is infinite there are elements $a_1,\ldots,a_p\in K$ such that
        \begin{equation*}
            0\neq G_{j_1,\ldots,j_s}(f_1(a_1,\ldots,a_p),\ldots,f_r(a_1,\ldots,a_p)).
        \end{equation*}Hence,
        \begin{equation*} 
            0\neq P(f_1(a_1,\ldots,a_p),\ldots,f_r(a_1,\ldots,a_p),X_{r+1},\ldots,X_{r+s}).
        \end{equation*}Let
        \begin{equation*}
           \begin{split}
                P_1(X_{r+1},\ldots,X_{r+s})
            &= P(f_1(a_1,\ldots,a_p),\ldots,f_r(a_1,\ldots,a_p),X_{r+1},\ldots,X_{r+s})
            \\&\in K[X_{r+1},\ldots,X_{r+s}].
           \end{split}
        \end{equation*}
        We have that
        \begin{equation*}
            P_1(g_1,\ldots,g_s)
            =P(f_1(a_1,\ldots,a_p),\ldots,f_r(a_1,\ldots,a_p),g_1,\ldots,g_s)=0.
        \end{equation*}
        which contradicts the algebraic independence of $g_1,\ldots,g_s$.
    \end{proof}

\begin{corollary}\label{corr-3.6}
    Fix positive integers $n,p,s$, and $r$. Let $A = K[x_1,\ldots,x_n,y_1,\ldots,y_p]$ be a polynomial ring over $K$. Take $g_1,\ldots,g_s\in A_1 := K[x_1,\ldots, x_n]$ to be algebraically independent forms of the same degree $d$ and let $f_1,\ldots,f_r\in A_2:= K[y_1,\ldots,y_p]$ be algebraically independent forms of degree $d$. Then
    \begin{equation*}
    \ell((f_1,\ldots,f_r,g_1,\ldots,g_s)) =
    \ell((f_1,\ldots,f_r))+\ell((g_1,\ldots,g_s)).
\end{equation*}
\end{corollary}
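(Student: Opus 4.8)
The plan is to reduce the statement entirely to Corollary~\ref{intro-spread} together with Lemma~\ref{lem-disjoint}. First I would note that since every $f_j$ and every $g_i$ is a form of the same degree $d$, each of the three ideals $(g_1,\ldots,g_s)$, $(f_1,\ldots,f_r)$, and $(f_1,\ldots,f_r,g_1,\ldots,g_s)$ is a homogeneous ideal generated by forms of a single degree. Hence Corollary~\ref{intro-spread} applies to each of them, translating the computation of the analytic spread into the computation of a transcendence degree over $K$.

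Next I would compute the two ``partial'' analytic spreads. By hypothesis the forms $g_1,\ldots,g_s$ are algebraically independent over $K$, so $\{g_1,\ldots,g_s\}$ is already a transcendence base of $K(g_1,\ldots,g_s)$ over $K$, and Corollary~\ref{intro-spread} gives $\ell((g_1,\ldots,g_s)) = s$. Symmetrically, $\ell((f_1,\ldots,f_r)) = r$.

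Finally I would invoke Lemma~\ref{lem-disjoint}: the forms $g_i$ lie in $K[x_1,\ldots,x_n]$, the forms $f_j$ lie in $K[y_1,\ldots,y_p]$, these two sets of variables are disjoint, and each family is separately algebraically independent over $K$; the lemma then says the combined family $\{f_1,\ldots,f_r,g_1,\ldots,g_s\}$ is algebraically independent over $K$. Consequently it is its own transcendence base among the generators of $(f_1,\ldots,f_r,g_1,\ldots,g_s)$, and Corollary~\ref{intro-spread} yields $\ell((f_1,\ldots,f_r,g_1,\ldots,g_s)) = r+s$. Comparing with the previous paragraph gives $\ell((f_1,\ldots,f_r,g_1,\ldots,g_s)) = r+s = \ell((f_1,\ldots,f_r)) + \ell((g_1,\ldots,g_s))$, which is the assertion.

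I do not expect a genuine obstacle here: the real content has already been isolated into Lemma~\ref{lem-disjoint} (which is where the hypothesis that $K$ is infinite is used, via specialization of the $f_j$ at a $K$-point), and Corollary~\ref{intro-spread} supplies the bridge between analytic spread and transcendence degree. The only hypothesis that must be tracked carefully is that all of the $f_j$ and $g_i$ have the same degree $d$, since this is exactly what makes all three ideals eligible for Corollary~\ref{intro-spread}.
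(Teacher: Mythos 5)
Your proof is correct and takes essentially the same route as the paper's: both reduce the claim to a transcendence-degree computation via Corollary~\ref{intro-spread} and then invoke Lemma~\ref{lem-disjoint} to see that the combined family is algebraically independent. The only difference is cosmetic: you use the stated hypothesis that each family is already algebraically independent to read off $\ell((f_1,\ldots,f_r))=r$ and $\ell((g_1,\ldots,g_s))=s$ directly, whereas the paper's proof first extracts transcendence bases $f_1,\ldots,f_a$ and $g_1,\ldots,g_b$ inside each family (which, under the hypotheses, are just the full families with $a=r$ and $b=s$), so your version is slightly more streamlined.
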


\begin{proof}
    By Corollary~\ref{intro-spread} it is enough to show that
    \begin{equation*}
        \trdeg_K K(f_1,\ldots,f_r,g_1,\ldots,g_s)
        =\trdeg_K K(f_1,\ldots,f_r)
        +\trdeg_K K(g_1,\ldots,g_s).
    \end{equation*} By Theorem \ref{thm-transbase}, after reindexing $f_1,\ldots,f_r,g_1,\ldots,g_s$ there are integers $a\leq r$ and $b\leq s$ such that $f_1,\ldots f_a$ is a transcendence base of $K(f_1\ldots,f_r)$ and $g_1,\ldots,g_b$ is a transcendence base of $K(g_1,\ldots,g_s)$. By Lemma~\ref{lem-disjoint}, the set $\mathcal{B} = \{f_1,\ldots,f_a,g_1,\ldots,g_b\}$ is algebraically independent. If $\mathcal{B}$ were not a transcendence base of $K(f_1,\ldots,f_r,g_1,\ldots,g_s)$, then without loss of generality, say $\{f_1,\ldots,f_a,g_1,\ldots,g_b,g_{b+1}\}$ would still be algebraically independent. In particular, $g_1,\ldots,g_b,g_{b+1}$ would be algebraically independent, which contradicts the fact that $g_1,\ldots,g_b$ is a transcendence base of $K(g_1,\ldots,g_s)$. Thus $\{f_1,\ldots,f_a,g_1,\ldots,g_b\}$ is a transcendence base of $K(f_1,\ldots,f_r,g_1,\ldots,g_s)$. This
    fact implies that
    \begin{equation*}
        a+b = \trdeg_K K(f_1,\ldots,f_r,g_1,\ldots,g_s).
    \end{equation*}
    This completes the proof, since $a=\trdeg_K K(f_1,\ldots,f_r)$ and $b = \trdeg_K K(g_1,\ldots,g_s)$.
\end{proof}

We are now ready to prove Theorems~\ref{thm-components} and \ref{thm-add}.
\begin{proof}[Proof of Theorem~\ref{thm-components}.] The assertion follows immediately from Corollary~\ref{corr-3.6} since $J_{G_i}$ and $J_{G_j}$ are
ideals in different sets of variables for all $i\neq j$.
\end{proof}

\begin{proof}[Proof of Theorem~\ref{thm-add}.]
    Let $F_1,\ldots,F_s$ and $F_1,\ldots,F_s,F_{s+1},\ldots,F_r$ be the minimal binomial generators of $J_G$ and $J_{G'}$, respectively.
    By Theorem ~\ref{thm-kit} and Corollary \ref{intro-spread}
    \begin{equation*}
        \ell(J_G)
        =\dim K[J_Gt]=\dim K[F_1,\ldots,F_s]
        =\trdeg_K K(F_1,\ldots,F_s)
    \end{equation*} and similarly $\ell(J_{G'}) = \trdeg_KK(F_1,\ldots,F_r)$. By Theorem~\ref{thm-transbase}, after a suitable reindexing of $F_1,\ldots,F_s$, there exists $p\in\{1,\ldots,s\}$ such that $F_1,\ldots, F_p$ is a transcendence base of $K(F_1,\ldots,F_s)$. Furthermore, by Theorem~\ref{thm-transbase}, there is a set $\mathcal{B}\subset \{F_1,\ldots,F_r\}$ containing $\{F_1,\ldots,F_p\}$ such that $\mathcal{B}$ is a transcendence base of $K(F_1,\ldots,F_r)$ over $K$. Hence
    \begin{equation*}
        \begin{split}
            \ell(J_G) = \trdeg_K K(F_1,\ldots,F_s)=p
            \leq |\mathcal{B}|
            = \trdeg_K K(F_1,\ldots,F_r)=\ell(J_{G'})
        \end{split}
    \end{equation*}which finishes the proof.
\end{proof}

\section{Bounds on the Analytic spread of binomial edge ideals}\label{sec-4}
This section establishes the main formulas on analytic spread of binomial edge ideals of graphs.
We start by computing $\ell(J_G)$ in the 
special case that $G$ is the {\it complete
graph} $K_n$.  Recall that $K_n$ is the graph
with vertex set $[n]$ and edge set $\{\{i,j\}\mid 1\leq i<j\leq n\}$.

\begin{theorem}\label{thm-kn}
    Let $K_n$ denote the complete graph on $n\geq 2$ vertices.  Then $\ell(J_{K_n}) = 2n - 3$.
\end{theorem}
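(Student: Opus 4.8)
The plan is to use Corollary~\ref{intro-spread}, which reduces the computation to finding the transcendence degree over $K$ of the field $K(f_{i,j} : 1 \le i < j \le n)$ generated by all the Plücker-type binomials $f_{i,j} = x_iy_j - x_jy_i$ inside $S = K[x_1,\dots,x_n,y_1,\dots,y_n]$. Equivalently, by Theorem~\ref{thm-kit} and Remark~\ref{rmk-trdeg}, $\ell(J_{K_n}) = \dim K[f_{i,j} : i<j]$. First I would prove the upper bound $\ell(J_{K_n}) \le 2n-3$. One way: observe that the subalgebra $K[f_{i,j}]$ is contained in the coordinate ring of the affine cone over the Grassmannian (or more elementarily, note that all the $f_{i,j}$ lie in the invariant ring under the scaling $(x_k,y_k) \mapsto (tx_k, ty_k)$, and also under the diagonal $\mathrm{SL}_2$-type substitution that fixes the $2\times 2$ minors), so the transcendence degree is at most $2n - 3$. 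More concretely, I would exhibit $2n-3$ algebraically independent elements among the $f_{i,j}$ and then show no larger subset can be independent: since every $f_{i,j}$ is a polynomial in the $2n$ variables but the $f_{i,j}$ all vanish on the locus where all $2\times 2$ minors vanish (a variety of dimension $n+1$), the relations cut the transcendence degree down; the cleanest argument is that $K[f_{i,j}]$ is a quotient of $S$ by the ideal defining the image of the map $S \to K[f_{i,j}]$, and the Plücker relations among the $f_{i,j}$ (the classical three-term relations $f_{i,j}f_{k,l} - f_{i,k}f_{j,l} + f_{i,l}f_{j,k} = 0$) show the image has dimension $2n-3$.

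For the lower bound, I would display an explicit transcendence base. A natural candidate is the set
\[
\mathcal{B} = \{ f_{1,2},\, f_{1,3},\, \dots,\, f_{1,n} \} \cup \{ f_{2,3},\, f_{3,4},\, \dots,\, f_{n-1,n} \},
\]
which has $(n-1) + (n-2) = 2n-3$ elements. To prove algebraic independence of $\mathcal{B}$, I would work over the rational function field and check that the Jacobian matrix of these $2n-3$ forms with respect to a suitably chosen set of $2n-3$ of the variables $x_k, y_k$ has nonvanishing determinant; this is a routine but slightly delicate linear-algebra computation, and it suffices to verify it at one convenient point (using that $K$ is infinite, hence the generic Jacobian rank equals the transcendence degree). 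Alternatively, and perhaps more cleanly, I would induct on $n$: the forms $f_{i,j}$ with $i,j \le n-1$ already give a transcendence base of size $2(n-1)-3 = 2n-5$ for $K_{n-1}$ by the inductive hypothesis, and then I would argue that adjoining $f_{1,n}$ and $f_{n-1,n}$ increases the transcendence degree by exactly $2$, because these two forms are the only generators involving the new variables $x_n, y_n$ and they are algebraically independent modulo the previous field (again a Jacobian or direct specialization check in the two new variables).

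I expect the main obstacle to be the upper bound $\ell(J_{K_n}) \le 2n-3$, or more precisely, proving that the transcendence degree does not exceed $2n-3$ — i.e., that the $2n-3$ forms above really do generate the whole field $K(f_{i,j})$ algebraically. The lower bound is just exhibiting independent elements and is mechanical, but the upper bound requires understanding all algebraic relations among the minors, which is exactly the content of the Plücker relations / the dimension of (the cone over) the Grassmannian $\mathrm{Gr}(2,n)$, whose affine cone has dimension $2(n-2)+1 = 2n-3$. I would phrase this by noting that the $f_{i,j}$ are the pullbacks of the Plücker coordinates under $\mathbb{A}^{2n} \dashrightarrow \mathrm{Gr}(2,n) \subset \mathbb{P}^{\binom{n}{2}-1}$ (sending a pair of row vectors to the $2\times n$ matrix they span), so $K[f_{i,j}]$ is, up to the scaling ambiguity, the homogeneous coordinate ring of $\mathrm{Gr}(2,n)$; its Krull dimension is $\dim \mathrm{Gr}(2,n) + 1 = 2(n-2)+1 = 2n-3$. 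Combined with the explicit transcendence base giving the reverse inequality, this yields $\ell(J_{K_n}) = 2n-3$.
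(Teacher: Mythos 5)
Your proposal is essentially correct, but it takes a genuinely different route from the paper. The paper also starts from the observation that $J_{K_n}$ is the ideal of $2\times 2$ minors of the generic $2\times n$ matrix, but then gets the lower bound from $\operatorname{ara}(J_{K_n}) = 2n-3$ (Bruns--Schw\"anzel) together with $\operatorname{ara}(I)\le\ell(I)$, and the upper bound from Burch's inequality $\ell(I)\le \dim S - \lim_m\operatorname{depth}(S/I^m)$ combined with the known fact that $\operatorname{depth}(S/J_{K_n}^m)=3$ for $m\ge 2$. Your argument instead computes $\operatorname{trdeg}_K K(f_{i,j})$ directly: the upper bound because $K[f_{i,j}]$ is a quotient of the Pl\"ucker algebra of $\mathrm{Gr}(2,n)$, whose cone has dimension $2(n-2)+1=2n-3$, and the lower bound by exhibiting the $2n-3$ minors $\{f_{1,2},\dots,f_{1,n}\}\cup\{f_{2,3},\dots,f_{n-1,n}\}$ and checking independence. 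Both routes are valid; yours is more self-contained and fits the transcendence-base machinery the paper sets up in Section 3, while the paper's is shorter at the price of citing two nontrivial external results. Two points to tighten in your write-up: (i) the remark that the $f_{i,j}$ ``vanish on the locus where all minors vanish'' does not by itself bound the transcendence degree --- the operative upper-bound argument is the Pl\"ucker-relation/Grassmannian one (or, equivalently, that the generic fiber of $(x,y)\mapsto(f_{i,j}(x,y))$ is a $3$-dimensional $\mathrm{SL}_2$-orbit), so lead with that; and (ii) since $K$ is only assumed infinite, not of characteristic zero, rely on the direction of the Jacobian criterion that holds in all characteristics (full rank implies independence), or better, use your inductive elimination argument: $f_{1,n}$ and $f_{n-1,n}$ are linear in $x_n,y_n$ with coefficient determinant $f_{1,n-1}\ne 0$, so adjoining them raises the transcendence degree by exactly $2$, which closes the induction cleanly.
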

\begin{proof}
    We make use of the observation that $J_{K_n}$
    can be realized as the ideal generated by the $2\times 2$ minors of the matrix $\left [ \begin{matrix}
            x_1 & \ldots & x_n\\
            y_1 & \ldots & y_n
        \end{matrix} \right ].$ This observation allows one to use \cite[Theorem 2]{BrunsSchowanzel} to conclude  $\operatorname{ara}(J_{K_n}) = 2n - 3$. By Remark \ref{rmk-ara} we have $\ell(J_{K_n}) \ge 2n - 3$. Lemma \ref{lem-spread} yields the upper bound
        \begin{equation}\label{eq: anal kn}
            \ell(J_{K_n}) \leq \dim S- \lim_{m \to \infty} \depth (S/(J_{K_n}^m))=2n-\lim_{m \to \infty} \depth (S/(J_{K_n}^m)).
        \end{equation}
But \cite[Theorem 2.1]{Robbiano} shows that $\depth (S/(J_{K_n}^m))=3$ for $m\geq 2$, so the result follows.
\end{proof}

\begin{remark}
 Note that 
    Theorem \ref{thm-kn} gives a nice family
    of examples  to show that the upper 
    bound $\ell(J) \leq \mu(J)$
    is far from optimal.  We have $2n-3 = \ell(J_{K_n}) < 
    \mu(J_{K_n}) = 
    \binom{n+1}{2}$ for $n \gg 0$. In this case $\ell(J_{K_n})$ grows
    linearly with respect on $n$, but $\mu(J_{K_n})$ grows
    quadratically.
\end{remark}

We also need a lower bound 
on the arithmetical rank on the binomial edge ideal.
We say $G$ is {\it $t$-vertex connected} if $t <n$
and $G\setminus S$ is 
connected for all subsets $S \subseteq [n]$
with $|S| < t$.  Here, $G\setminus S$ denotes the graph
$G$ with all vertices and adjacent
edges in $S$ removed.  The {\it vertex connectivity} of $G$ is
the largest integer $t$ such that $G$ is $t$-vertex 
connected.

\begin{theorem}[{\cite[Theorem 3.5]{kats}}]\label{arabound}
Suppose $G$ is a connected graph on $n$ vertices with
vertex connectivity $t$.
Then 
  $\operatorname{ara}(J_G) \geq n+t-2$.
\end{theorem}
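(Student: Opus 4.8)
The plan is to bound $\operatorname{ara}(J_G)$ below by the cohomological dimension $\operatorname{cd}(J_G):=\max\{i : H^i_{J_G}(S)\neq 0\}$: if $\sqrt{J_G}=\sqrt{(g_1,\dots,g_k)}$ then $H^\bullet_{J_G}(-)$ is computed by the \v{C}ech complex on $g_1,\dots,g_k$, which has length $k$, so $\operatorname{cd}(J_G)\le\operatorname{ara}(J_G)$; thus it suffices to show $\operatorname{cd}(J_G)\ge n+t-2$. If $G=K_n$ (so $t=n-1$) this is $\operatorname{cd}(J_{K_n})\ge 2n-3$, which holds because $\operatorname{ara}(J_{K_n})=2n-3$ by \cite{BrunsSchowanzel}, as recorded in the proof of Theorem~\ref{thm-kn}; the complete graph genuinely needs this determinantal input, since $2n-3$ exceeds the maximal height $n-1$ of a minimal prime of $J_{K_n}$. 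So assume $G$ is non-complete. As $G$ has vertex connectivity $t$, there is a set $S\subseteq[n]$ with $|S|=t$ such that $G\setminus S$ has connected components $C_1,\dots,C_c$ with $c\ge 2$; since $S$ is a minimal disconnecting set, every vertex of $S$ has a neighbor in each $C_k$ (otherwise $S$ minus that vertex would still disconnect $G$).

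The tool is $\operatorname{cd}(J_G)\ge\operatorname{cd}(J_GS_P)$ for any prime $P\supseteq J_G$, valid because local cohomology commutes with localization. I would apply this with
\[
P=Q:=J_{K_n}+(x_i,y_i:i\in S).
\]
Since $J_G\subseteq J_{K_n}$ we have $J_G\subseteq Q$; modulo $(x_i,y_i:i\in S)$ the ideal $J_{K_n}$ becomes the ideal of $2\times 2$ minors of the $2\times(n-t)$ matrix on the columns outside $S$, a generic determinantal prime, so $Q$ is prime and $\operatorname{ht}(Q)=2t+(n-t-1)=n+t-1$. One should not localize at the minimal prime $P_S$ of $J_G$ instead: over $P_S$ the minors occurring below become units, whereas over $Q$ they do not, and this is the point of the choice.

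Over $S_Q$ the variables $x_j,y_j$ $(j\notin S)$ become units, while $x_i,y_i$ $(i\in S)$ and every minor $x_jy_{j'}-x_{j'}y_j$ $(j,j'\notin S)$ lie in the maximal ideal. Fixing $r_k\in V(C_k)$ and putting $\lambda_k:=y_{r_k}/x_{r_k}$, the edge generators of $J_GS_Q$ collapse: edges inside $C_k$ force $y_j=\lambda_k x_j$ for all $j\in V(C_k)$ (as $C_k$ is connected); an edge from $i\in S$ into $C_k$ forces $y_i=\lambda_k x_i$; edges inside $S$ become redundant. Because every $i\in S$ meets every $C_k$, these amount to the $n-c$ ``linear'' relations $L:=\{y_j-\lambda_{k(j)}x_j:j\notin\{r_1,\dots,r_c\}\}$ (where $k(j)$ is the index of the component of $j$, and $k(i):=1$ for $i\in S$) together with the ideal $\mathfrak a\mathfrak b$, where $\mathfrak a=(x_i:i\in S)$ and $\mathfrak b=(x_{r_1}y_{r_k}-x_{r_k}y_{r_1}:2\le k\le c)$. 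A direct check identifies $S_Q$ as a regular local ring of dimension $n+t-1$ whose maximal ideal is generated by $L$ together with $\{x_i:i\in S\}$ and the minors generating $\mathfrak b$; in particular $L$ is part of a regular system of parameters, and modulo $L$ the ideals $\mathfrak a$ and $\mathfrak b$ are generated by disjoint blocks of the remaining parameters, with $\mathfrak a+\mathfrak b$ equal to the maximal ideal of $S_Q/L$. The K\"unneth formula for local cohomology of ideals in disjoint variables then gives $\operatorname{cd}(J_GS_Q)=(n-c)+\operatorname{cd}_{S_Q/L}(\mathfrak a\mathfrak b)$, and a Mayer--Vietoris argument — $H^\bullet_{\mathfrak a\mathfrak b}=H^\bullet_{\mathfrak a\cap\mathfrak b}$; the modules $H^{t+c-1}_{\mathfrak a}$ and $H^{t+c-1}_{\mathfrak b}$ vanish since $\operatorname{cd}\mathfrak a=t$ and $\operatorname{cd}\mathfrak b=c-1$ are both below $t+c-1$; and $H^{t+c-1}_{\mathfrak a+\mathfrak b}\neq 0$ as it is the top local cohomology of the $(t+c-1)$-dimensional regular local ring $S_Q/L$ — shows $H^{t+c-2}_{\mathfrak a\cap\mathfrak b}$ surjects onto $H^{t+c-1}_{\mathfrak a+\mathfrak b}$, so $\operatorname{cd}_{S_Q/L}(\mathfrak a\mathfrak b)\ge t+c-2$. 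Therefore $\operatorname{cd}(J_G)\ge\operatorname{cd}(J_GS_Q)\ge(n-c)+(t+c-2)=n+t-2$, and $\operatorname{ara}(J_G)\ge\operatorname{cd}(J_G)\ge n+t-2$.

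The step I expect to be the main obstacle is the ``direct check'' above: pinning down $S_Q$ and its maximal ideal precisely, confirming that the $n+t-1$ listed elements form a regular system of parameters (equivalently, generate $QS_Q$), and deducing that $L$, $\mathfrak a$, $\mathfrak b$ sit in pairwise disjoint coordinate blocks so that the K\"unneth formula applies — this is exactly where the combinatorial input ``every vertex of a minimal cut meets every component'' enters, and why $Q$ rather than $P_S$ is the correct prime. The remaining work (the vanishing bookkeeping in the Mayer--Vietoris sequence, and the separate complete-graph case) is routine.
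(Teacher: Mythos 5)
The paper does not actually prove this statement---it is quoted verbatim from \cite[Theorem 3.5]{kats}---so your argument can only be judged on its own terms. Your overall strategy is viable and, as far as I can check, essentially correct: the reduction $\operatorname{ara}(J_G)\ge\operatorname{cd}(J_G)$, the choice of the non-minimal prime $Q=J_{K_n}+(x_i,y_i: i\in S)$ rather than the minimal prime $P_S$ (over which $\mathfrak b$ would indeed become the unit ideal), and the identification $J_GS_Q=(L)+\mathfrak a\mathfrak b$ all hold up; the last really does use that every vertex of a minimum cut has a neighbour in every component, exactly as you say, and the generator-by-generator verification goes through. Two points need repair. First, in the complete-graph case your inference runs backwards: $\operatorname{ara}(J_{K_n})=2n-3$ gives $\operatorname{cd}(J_{K_n})\le 2n-3$, not $\ge$, and in fact $\operatorname{cd}(J_{K_n})=\operatorname{ht}(J_{K_n})=n-1$ in positive characteristic by Peskine--Szpiro (determinantal quotients are Cohen--Macaulay), so the claim $\operatorname{cd}(J_{K_n})\ge 2n-3$ is false there. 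This is harmless only because for $K_n$ you do not need cohomological dimension at all: \cite{BrunsSchowanzel} gives $\operatorname{ara}(J_{K_n})=2n-3=n+t-2$ directly, which is exactly the statement to be proved in that case. Say that and drop the detour through $\operatorname{cd}$.

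Second, the K\"unneth step is the genuine gap, as you anticipate: $S_Q$ is not a tensor product of two rings, so turning ``disjoint blocks of a regular system of parameters'' into a K\"unneth decomposition requires passing to the completion $\widehat{S_Q}\cong\kappa(Q)[[z_1,\dots,z_{n+t-1}]]$ via Cohen's structure theorem, transferring nonvanishing by faithful flatness, and invoking a K\"unneth formula for complete tensor products. All of this can be bypassed. Set $I_1=(L)+\mathfrak a$ and $I_2=(L)+\mathfrak b$ in $S_Q$. Then $I_1I_2\subseteq(L)+\mathfrak a\mathfrak b\subseteq I_1\cap I_2$, so $\sqrt{J_GS_Q}=\sqrt{I_1\cap I_2}$; moreover $I_1+I_2=QS_Q$ is the maximal ideal, $I_1$ is generated by $(n-c)+t\le n+t-2$ elements since $c\ge 2$, and $I_2$ by $(n-c)+(c-1)=n-1\le n+t-2$ elements since $t\ge 1$, so both $H^{n+t-1}_{I_1}(S_Q)$ and $H^{n+t-1}_{I_2}(S_Q)$ vanish. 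The Mayer--Vietoris sequence in degree $n+t-2$ then yields a surjection $H^{n+t-2}_{I_1\cap I_2}(S_Q)\twoheadrightarrow H^{n+t-1}_{QS_Q}(S_Q)\ne 0$, whence $\operatorname{cd}(J_G)\ge\operatorname{cd}(J_GS_Q)\ge n+t-2$ in all characteristics, with no completion and no K\"unneth formula. With these two adjustments your argument becomes a complete proof of the cited result.
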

  
We can now prove our first main result of this section.

\begin{theorem}\label{bounds}
If $G$ is a connected graph on $n\geq 2$ vertices, then $n - 1 \leq \ell(J_G) \leq 2n - 3$.    
\end{theorem}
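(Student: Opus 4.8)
\textbf{Proof proposal for Theorem~\ref{bounds}.}

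The plan is to obtain each bound separately, using the tools assembled above together with the two special graphs that realize the extremes, namely the tree (for the lower bound) and the complete graph $K_n$ (for the upper bound).

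For the upper bound, I would argue by monotonicity. Since $G$ is a graph on $n$ vertices, it is a subgraph of the complete graph $K_n$ on the same vertex set. By Theorem~\ref{thm-add} we therefore have $\ell(J_G) \leq \ell(J_{K_n})$, and by Theorem~\ref{thm-kn} the right-hand side equals $2n-3$. This immediately gives $\ell(J_G) \leq 2n-3$, and the bound is attained exactly when $G = K_n$.

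For the lower bound, the idea is to reduce to a spanning tree. Since $G$ is connected on $n$ vertices, it contains a spanning tree $T$ (a subgraph on all $n$ vertices with $n-1$ edges and no cycles). By Theorem~\ref{thm-add} again, $\ell(J_T) \leq \ell(J_G)$, so it suffices to show $\ell(J_T) \geq n-1$; since $T$ has exactly $n-1$ edges, this forces $\ell(J_T) = n-1 = |E(T)|$. To see $\ell(J_T) \geq n-1$, I would invoke the arithmetical-rank lower bound: by Remark~\ref{rmk-ara}, $\operatorname{ara}(J_T) \leq \ell(J_T)$, and a connected graph has vertex connectivity $t \geq 1$, so Theorem~\ref{arabound} gives $\operatorname{ara}(J_T) \geq n + t - 2 \geq n - 1$. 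Chaining these inequalities yields $n-1 \leq \operatorname{ara}(J_T) \leq \ell(J_T) \leq \ell(J_G)$, completing the lower bound, with equality when $G$ itself is a tree. (Alternatively, one may apply Theorem~\ref{arabound} directly to $G$ to get $\ell(J_G) \geq \operatorname{ara}(J_G) \geq n-1$ without passing to a spanning tree; I would mention both routes but the direct one is cleanest.)

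I do not expect a serious obstacle here: the statement is essentially a corollary of the machinery already in place, and the only care needed is bookkeeping about which ambient polynomial ring we work in when comparing $G$ to $K_n$ (both live naturally in $S = K[x_1,\ldots,x_n,y_1,\ldots,y_n]$, so Theorem~\ref{thm-add} applies verbatim) and the mild hypothesis $n \geq 2$ to ensure vertex connectivity is defined and $2n-3 \geq 1$. The genuinely substantive inputs — Theorem~\ref{thm-kn}, Theorem~\ref{thm-add}, and Theorem~\ref{arabound} — have all been established or cited already, so the proof of Theorem~\ref{bounds} is short.
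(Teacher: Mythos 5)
Your proposal is correct and follows essentially the same route as the paper: the upper bound via Theorem~\ref{thm-add} applied to $G \subseteq K_n$ together with Theorem~\ref{thm-kn}, and the lower bound via Theorem~\ref{arabound} and Remark~\ref{rmk-ara} applied directly to $G$ with vertex connectivity $t \geq 1$. The spanning-tree detour you mention is an unnecessary (though harmless) variant; the direct argument you also give is exactly what the paper does.
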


\begin{proof}
From Theorem \ref{arabound} we get $\ara(J_G)\geq n+t-2$ where $t$ is the vertex connectivity of $G$. The connectivity assumption on $G$ implies $t\geq 1$, and 
thus by Remark \ref{rmk-ara} we have
\begin{equation*}
    n-1\leq \ara(J_G) \leq \ell(J_G).
\end{equation*}
By Lemma~\ref{thm-add}, $\ell(J_G) \le \ell(J_{G'})$, where $G'$ is formed from $G$ by adding  a new edge
between vertices that are currently not joined.  By
repeatedly adding edges in this way, the graph $K_n$
is eventually created. Thus $\ell(J_G) \leq \ell(J_{K_n})$. Combining this fact with Theorem~\ref{thm-kn} we have $\ell(J_G) \leq 2n - 3$.
\end{proof}

\begin{remark}
    Theorem \ref{thm-kn} shows that the upper bound is
    tight.  In Section \ref{sec-exact} we will
    show that if $G$ is a tree on $n$ vertices, then
    $\ell(J_G) = |E| = n-1$.  So both bounds are tight.
\end{remark}

As a consequence of Theorem \ref{bounds} and some elementary graph theory, we obtain an additional upper bound for the analytic spread of a binomial edge ideal. 
Recall that for $v\in V$, the number of edges adjacent to $v$ is the \textit{degree of $v$}, denoted $\deg(v)$.
The {\it minimum degree} of $G$ is the integer
$\min\{\deg(v)\mid v\in V\}.$  

\begin{corollary}\label{cor: anals < E}
Suppose $G$ is a connected graph whose minimal
degree is at least 4.
Then
 \begin{equation*}
     \ell(J_G)\leq |E|-3.
 \end{equation*}
\end{corollary}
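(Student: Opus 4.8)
The plan is to combine the upper bound $\ell(J_G) \leq 2n-3$ from Theorem \ref{bounds} with a lower bound on $|E|$ coming from the minimum-degree hypothesis, so that the two meet in the right place. If every vertex of $G$ has degree at least $4$, then by the handshake lemma $2|E| = \sum_{v \in V} \deg(v) \geq 4n$, so $|E| \geq 2n$. Hence $2n - 3 \leq |E| - 3$, and therefore
\[
\ell(J_G) \leq 2n - 3 \leq |E| - 3,
\]
which is exactly the claimed inequality. This is the whole argument; it is genuinely short, and the only inputs are Theorem \ref{bounds} (which applies since a graph with minimum degree $\geq 4$ on $n \geq 5$ vertices is automatically connected as hypothesized, and in any case connectivity is part of the statement) and the elementary degree-sum formula.

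The one point that warrants a sentence of care is the edge case $n \leq 1$ or very small $n$: a connected graph with minimum degree at least $4$ must have at least $5$ vertices (each vertex needs $4$ distinct neighbors), so $n \geq 5 \geq 2$ and Theorem \ref{bounds} applies without qualification. I would state this explicitly to justify invoking Theorem \ref{bounds}. There is no real obstacle here — the "hard part" is simply noticing that the trivial bound $|E| \geq 2n$ is exactly what is needed to convert $2n-3$ into $|E|-3$; all the substantive work was already done in Theorem \ref{bounds} (and, behind it, in Theorems \ref{thm-kn} and \ref{arabound}). I would also remark, as the paper does elsewhere, that this shows the naive bound $\ell(J_G) \leq \mu(J_G) = |E|$ is never sharp for graphs of large minimum degree, complementing the $K_n$ example.
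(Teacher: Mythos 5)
Your argument is exactly the paper's proof: the handshake lemma gives $2|E|\geq 4n$, hence $|E|\geq 2n$, and combining with Theorem~\ref{bounds} yields $\ell(J_G)\leq 2n-3\leq |E|-3$. The extra observation that minimum degree $\geq 4$ forces $n\geq 5$ (so Theorem~\ref{bounds} applies) is a reasonable small addition but does not change the route.
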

\begin{proof}
Suppose $G$ has $n$ vertices.  By the Handshaking
Theorem, $4n \leq \sum_{v \in V} \deg(v) = 2|E|$, and
thus  $2n\leq |E|$. Then by Theorem \ref{bounds}
we have $
        \ell(J_G)\leq 2n-3 \leq |E|-3.$
\end{proof}


\section{Exact values of the analytic spread}\label{sec-exact}
In this section we compute the precise value of the analytic spread of the binomial edge ideals of unicyclic and pseudo-forest graphs.
In these cases,
$\ell(J_G) = \mu(J_G) = |E|$, thus giving
examples where the upper bound of $\mu(J_G)$ on $\ell(J_G)$ is achieved. Recall that a {\it pseudo-tree} is a connected graph with at most one cycle, and a {\it unicyclic graph} $G$ is a connected graph with exactly one cycle.

Now we work towards proving Corollary~\ref{corr-tree}, which is used in the proof of Theorem~\ref{thm-main}. To state Proposition \ref{corr-leaf}, from which Corollary \ref{corr-tree} follows, we need the following definition.
\begin{definition}
    Let $G = (V,E)$ be a graph and let $G' = (V',E')$ be another graph such that $V\subset V'$, $|V'| = |V|+1$, and $E' = E\cup \{e\}$, where $e = \{i,j\}$, $i\in V$ and $\{j\} = V'\setminus V$. Then we say that $G'$ is a graph obtained by \textit{adding a leaf to $G$}.
\end{definition}
In Proposition~\ref{corr-leaf}, we investigate the behavior of analytic spread under the operation of adding a leaf. This allows us to compute the analytic spread of the binomial edge ideal of a tree in Corollary~\ref{corr-tree}.
\begin{proposition}\label{corr-leaf}
    Let $G$ be a graph on $n-1\geq 1$ vertices. Let $G'$ be a graph on $n$ vertices obtained by adding a leaf to $G$. Then $\ell(J_{G'}) = \ell(J_G)+1$.
\end{proposition}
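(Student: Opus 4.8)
The plan is to use Corollary~\ref{intro-spread}, which reduces the computation of $\ell(J_{G'})$ to finding the transcendence degree of $K(\mathcal{F}')$, where $\mathcal{F}' = \{f_{k,l} \mid \{k,l\} \in E'\}$ is the set of binomial generators of $J_{G'}$. Writing $e = \{i,j\}$ for the new edge with $j$ the new leaf vertex, we have $\mathcal{F}' = \mathcal{F} \sqcup \{f_{i,j}\}$, where $\mathcal{F} = \{f_{k,l} \mid \{k,l\} \in E\}$ generates $J_G$. Since $\ell(J_{G'}) \le \mu(J_{G'}) = \mu(J_G) + 1 = \ell(J_G) + 1$ would follow if we knew $\ell(J_G) = \mu(J_G)$ — which we do not in general — the inequality $\ell(J_{G'}) \le \ell(J_G) + 1$ instead follows from a transcendence-basis argument: by Theorem~\ref{thm-transbase}, a transcendence base of $K(\mathcal{F})$ extends to one of $K(\mathcal{F}')$ by adjoining a subset of $\{f_{i,j}\}$, hence at most one new element. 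So the content of the proposition is the reverse inequality $\ell(J_{G'}) \ge \ell(J_G) + 1$, equivalently: \emph{no transcendence base of $K(\mathcal{F})$ is already a transcendence base of $K(\mathcal{F}')$}, i.e., if $\mathcal{B} \subseteq \mathcal{F}$ is algebraically independent over $K$, then $\mathcal{B} \cup \{f_{i,j}\}$ is still algebraically independent over $K$.

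To prove this, the key observation is that the leaf variables $x_j, y_j$ appear \emph{only} in the single generator $f_{i,j} = x_i y_j - x_j y_i$ and in no element of $\mathcal{F}$. I would argue by contradiction: suppose $\mathcal{B} = \{g_1,\dots,g_b\} \subseteq \mathcal{F}$ is algebraically independent but $\mathcal{B} \cup \{f_{i,j}\}$ is not, so there is a nonzero polynomial $P(X_1,\dots,X_b,X_{b+1}) \in K[X_1,\dots,X_{b+1}]$ with $P(g_1,\dots,g_b,f_{i,j}) = 0$ in $S$. Write $P = \sum_{m \ge 0} Q_m(X_1,\dots,X_b) X_{b+1}^m$ with some $Q_{m_0} \ne 0$; we may assume $Q_0 \ne 0$ after dividing by a power of $X_{b+1}$ (note $f_{i,j} \ne 0$, so $f_{i,j}$ is not a zero-divisor, and if $Q_0 = \dots = Q_{k-1} = 0$ we may factor out $X_{b+1}^k$). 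Now specialize: the variables $x_j, y_j$ occur in the relation only through $f_{i,j}$, so setting $x_j = y_j = 0$ sends $f_{i,j} \mapsto 0$ and fixes every $g_m$. The relation becomes $Q_0(g_1,\dots,g_b) = 0$ in the subring $K[x_k, y_k : k \ne j] \subseteq S$, contradicting the algebraic independence of $g_1,\dots,g_b$ (which, crucially, already live in this subring since $G$ does not involve vertex $j$). This contradiction shows $\mathcal{B}\cup\{f_{i,j}\}$ is algebraically independent, so any transcendence base of $K(\mathcal{F})$ can be properly enlarged inside $K(\mathcal{F}')$, giving $\trdeg_K K(\mathcal{F}') \ge \trdeg_K K(\mathcal{F}) + 1$, i.e., $\ell(J_{G'}) \ge \ell(J_G) + 1$.

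Combining the two inequalities yields $\ell(J_{G'}) = \ell(J_G) + 1$. The main obstacle is making the specialization argument clean — one must be careful that $Q_0 \ne 0$ can be arranged (handled by factoring out the maximal power of $X_{b+1}$ dividing $P$, using that $S$ is a domain and $f_{i,j} \ne 0$) and that setting $x_j = y_j = 0$ is a well-defined $K$-algebra homomorphism $S \to K[x_k,y_k : k\ne j]$ that acts as the identity on each $g_m$ and kills $f_{i,j}$; both are straightforward once set up. An alternative, slicker route to the reverse inequality would invoke Theorem~\ref{thm-add} applied after observing $\ell(J_{G'}) \ge \ell(J_{H})$ for a suitable subgraph $H$ — for instance a path — but the direct specialization argument above is self-contained and does not require identifying such an $H$, so I would present that.
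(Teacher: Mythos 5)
Your proposal is correct and follows essentially the same strategy as the paper: both reduce to a transcendence-degree computation via Corollary~\ref{intro-spread} and then show that adjoining the leaf generator to an algebraically independent subset of the old generators preserves algebraic independence, by specializing the leaf variables (which occur only in the new generator) and invoking the independence of the remaining generators. The only difference is the choice of specialization --- the paper substitutes $y_n \mapsto x_n$ so that the new generator becomes $x_n(x_i - y_i)$ and compares coefficients of powers of $x_n$, whereas you set $x_j = y_j = 0$ after factoring out the maximal power of the last indeterminate --- and both variants are valid.
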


\begin{proof}
Recall that $J_G$ is an ideal in a polynomial ring with $2(n-1)$ variables, that is,
\begin{equation*}
    J_G \subset K[x_1,\ldots,x_{n-1},y_1,\ldots,y_{n-1}]\subset K[x_1,\ldots,x_n,y_1,\ldots,y_n]=S.
\end{equation*}
Let $F_1,\ldots,F_l\in S$ be the minimal binomial generators of $J_G$ and let $F\in S$ be the binomial corresponding to the given leaf, so that $F = x_iy_n-x_ny_i$ for some $i\in\{1,\ldots,n-1\}$. By Theorem~\ref{thm-transbase}, there is a subset $F_1,\ldots,F_s$ of $\{F_1,\ldots,F_l\}$ such that $F_1,\ldots,F_s$ is a transcendence base of $K(F_1,\ldots,F_l)$. In particular, $\trdeg_K F(J_G) = s$ by Lemma~\ref{intro-spread}. Let $A = K[F_1t,\ldots,F_st,Ft]$. By Theorem~\ref{thm-transbase}, $F_1t,\ldots,F_st,Ft$ contains a transcendence base of $K(F_1t,\ldots,F_st,Ft)$.

Note that $K[F_1t,\ldots,F_st]$ and $K[F_1,\ldots,F_s]$ have the same dimension (as there are ring surjections both ways given by $F_it\mapsto F_i$ and $F_i\mapsto F_it$). Thus $F_1t,\ldots,F_st$ is still algebraically independent over $K$.

Next, we prove that $F_1t,\ldots,F_st,Ft$ is a transcendence base of $K(F_1t,\ldots,F_st,Ft)$. Consider the ring map 
\begin{equation*}
    \phi: A =K[F_1t,\ldots,F_st,Ft] 
    \to K[x_1\ldots,x_n,y_1,\ldots,y_{n},t]
\end{equation*} given by $F_it\mapsto F_it$ for $1\leq i\leq s$ and $Ft\mapsto x_n(x_i-y_i)t$. Let $L = \phi(Ft) = x_n(x_i-y_i)t$ and note that $F_1t,\ldots,F_st,L$ contains a transcendence base of $\text{QF}(B)$, where $B:=\im(\phi)$.
Suppose that there is a polynomial $P:=P(X_1,\ldots,X_{s+1})$ over $K$ such that $P(F_1t,\ldots,F_st,L) = 0$. Substituting $t=1$ into this equation we have that $P(F_1,\ldots,F_s,x_n(x_i-y_i))=0$. It suffices to show that $P=0$. One can rewrite $P$ as
\begin{equation}\label{eq2}
\end{equation}
where $r\geq 0$ and $Q_0,\ldots,Q_r$ are polynomials in $K[X_1,\ldots,X_s]$. Substituting $F_1,\ldots,F_s$, $x_n(x_i-y_i)$ for $X_1,\ldots,X_{s+1}$ respectively into (\ref{eq2}), we arrive at the equation
\begin{equation*}
    0 = Q_0(F_1,\ldots,F_s)+ Q_1(F_1,\ldots,F_s)x_n(x_i-y_i)+\cdots + Q_r(F_1,\ldots,F_s)x_n^r(x_i-y_i)^r.
\end{equation*}Considering the preceding equation as an equality of polynomials in the variable $x_n$ yields that $Q_j(F_1,\ldots,F_s)(x_i-y_i)^j=0$, and since $(x_i-y_i)^j\neq 0$, it follows that $Q_j(F_1,\ldots,F_s)=0$ for all $0\leq j\leq r$. Since $F_1,\ldots,F_s$ are algebraically independent over $K$, each polynomial $Q_j$ must be the zero polynomial. Thus $Q_0=\cdots=Q_r=0$ and subsequently $P=0$. Therefore $F_1t,\ldots,F_st,L$ are algebraically independent over $K$, which yields the inequality
\begin{equation*}
    \dim(A) \geq \dim(B) = \trdeg_K {\rm QF}(B) = s+1.
\end{equation*}On the other hand, since $A=K[F_1t,\ldots,F_st,Ft]$ and $F_1t,\ldots,F_st,Ft$ contains a transcendence base of $\text{QF}(A)$, $\dim(A) = \trdeg_K \text{QF}(A)\leq s+1$. So $\trdeg K(F_1t,\ldots,F_st,Ft)=s+1$. Thus, $K(F_1t,\ldots, F_lt,Ft)$ has transcendence degree $s+1 = \trdeg_KK(F_1t,\ldots,F_lt) + 1$. Finally, Remark~\ref{rmk-trdeg} implies the following equation which finishes the proof.
\begin{equation*}
    \begin{split}
        \ell(J_{G'}) &= \dim(K[J_{G'}t]) = \trdeg_KK(F_1t,\ldots,F_lt,Ft)
        \\&= \trdeg_KK(F_1t,\ldots,F_lt)+1=\dim(K[J_Gt])+1 = \ell(J_{G})+1. \qedhere
    \end{split}
\end{equation*} 
\end{proof}

\begin{corollary}\label{corr-tree}
    Let $T$ be a tree. Then $\ell(T) = |E|$.
\end{corollary}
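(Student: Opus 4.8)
The plan is to prove Corollary~\ref{corr-tree} by induction on the number of vertices $n$ of the tree $T$, using Proposition~\ref{corr-leaf} as the inductive step. Note first a small correction: the statement should read $\ell(J_T) = |E|$ rather than $\ell(T) = |E|$, since the analytic spread is an invariant of the binomial edge ideal.

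For the base case, take $n = 1$: here $T$ is a single vertex, $E = \emptyset$, and $J_T = (0)$, so $\ell(J_T) = 0 = |E|$. (Alternatively one may start the induction at $n=2$, where $T$ is a single edge, $J_T = (x_1y_2 - x_2y_1)$ is a principal ideal generated by a nonzerodivisor, and $\ell(J_T) = 1 = |E|$.) For the inductive step, suppose the claim holds for all trees on $n-1$ vertices, and let $T$ be a tree on $n \geq 2$ vertices. The key graph-theoretic fact I will invoke is that every finite tree on at least two vertices has a leaf, i.e., a vertex of degree $1$; removing that leaf vertex $j$ together with its unique incident edge $e = \{i,j\}$ yields a tree $T'$ on $n-1$ vertices with $|E(T')| = |E(T)| - 1$, and conversely $T$ is obtained from $T'$ by adding a leaf in the sense of the Definition preceding Proposition~\ref{corr-leaf}. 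Applying Proposition~\ref{corr-leaf} gives $\ell(J_T) = \ell(J_{T'}) + 1$, and by the inductive hypothesis $\ell(J_{T'}) = |E(T')| = |E(T)| - 1$, so $\ell(J_T) = |E(T)|$, completing the induction.

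There is essentially no serious obstacle here: the corollary is a direct induction on top of Proposition~\ref{corr-leaf}, and the only content beyond that proposition is the standard fact that a finite tree with at least one edge has a leaf (proved, e.g., by taking an endpoint of a longest path, or by a counting argument since a tree on $n$ vertices has $n-1$ edges and total degree $2(n-1) < 2n$). The one point worth stating carefully is the hypothesis bookkeeping: Proposition~\ref{corr-leaf} requires the smaller graph $G$ to have $n - 1 \geq 1$ vertices, which is exactly why the induction is grounded at $n = 1$ (or $n=2$) rather than at $n = 0$, and why it applies at every step for $n \geq 2$.
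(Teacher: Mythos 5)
Your proof is correct and follows essentially the same route as the paper: ground the induction at a single edge (where $\ell(J_T)=1$ by Corollary~\ref{intro-spread}) and then apply Proposition~\ref{corr-leaf} repeatedly, using the standard fact that every tree on at least two vertices has a leaf. You simply make the induction explicit where the paper leaves it implicit, and your remark that the statement should read $\ell(J_T)$ rather than $\ell(T)$ is a fair typographical correction.
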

\begin{proof}
    Firstly, if $T$ is a single edge $\{1,2\}$, then Lemma~\ref{intro-spread} implies
    \begin{equation*}
        \ell(T) = \dim K[x_1y_2-x_2y_1]=1.
    \end{equation*}The result now follows by Proposition~\ref{corr-leaf}.
\end{proof}

Similar to adding a leaf to a graph and observing how the analytic spread of $J_G$ changes, we can observe how $\ell(J_G)$ changes under the operation of adding a handle to the graph; this means adding a vertex labeled $n+1$ to a graph $G$ on $n$ vertices and edges $\{i,n+1 \}$ and $\{j,n+1 \}$ where $i\neq j$ and $1\leq i,j \leq n$.

\begin{example}\label{ex: handle}
\
\medskip

\begin{minipage}{.5\textwidth}\centering
\begin{tikzpicture}[scale=1.5, every node/.style={circle, draw, fill=white, inner sep=2pt}]
  \node (v1) at (0:1) {1};
  \node (v2) at (60:1) {2};
  \node (v3) at (120:1) {3};
  \node (v4) at (180:1) {4};
  \node (v5) at (240:1) {5};
  \node (v6) at (300:1) {6};
  \draw (v1) -- (v2);
  \draw (v2) -- (v3);
  \draw (v3) -- (v4);
  \draw (v4) -- (v5);
  \draw (v5) -- (v6);
  \draw (v6) -- (v1);
  \foreach \i in {v3,v4,v5}
  \draw (v1)--(\i);
\end{tikzpicture}
\end{minipage}
\hspace{-1 cm}
\begin{minipage}{.5\textwidth}
 Let $G$ be the graph on  $V=\{1,\ldots,6\}$ that is pictured.  Note that $\{5,6\}$ and $\{1,6\}$ form
 a handle for $G'$, the induced graph on $\{1,\ldots,5\}$.
 Using Macaulay2 one can show that $\ell(J_{G'}) =7$
 and $\ell(J_G) = 9$.
 \end{minipage}
\end{example}
Example~\ref{ex: handle} and similar computations in Macaulay2 lead to the following question.
\begin{question}\label{Quesiton: handle}
    Let $G$ be a planar graph on $n\geq 3$ vertices. If $G'$ is a planar graph obtained from $G$ by adding a handle, then is it always true that
    $
        \ell(J_{G'})=\ell(J_{G})+2?
    $
 \end{question}

The last step in the proof of Theorem \ref{thm-main} is the proof of Theorem \ref{thm-uni}, which follows from Lemma \ref{lem-cycle}.

\begin{lemma}\label{lem-cycle}
Let $C_n$ be the $n$-cycle.  Then $\ell(J_{C_n}) = n$. 
\end{lemma}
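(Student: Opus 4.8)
\textbf{Proof proposal for Lemma~\ref{lem-cycle}.}

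The plan is to compute the transcendence degree of $K(f_{1,2}, f_{2,3}, \dots, f_{n-1,n}, f_{n,1})$ directly, where $f_{i,j} = x_iy_j - x_jy_i$, using Corollary~\ref{intro-spread}. The cycle $C_n$ contains the path $P_n$ on the same $n$ vertices as a subgraph (delete the edge $\{n,1\}$), and $P_n$ is a tree with $n-1$ edges. By Corollary~\ref{corr-tree}, $\ell(J_{P_n}) = n-1$, so the $n-1$ generators $f_{1,2}, \dots, f_{n-1,n}$ are already algebraically independent over $K$; they form a transcendence base of the field they generate. Since $\ell(J_{C_n}) \le \mu(J_{C_n}) = n$ by Corollary~\ref{intro-spread} (or the general bound $\ell \le \mu$), it suffices to show that adding the one remaining generator $f_{n,1}$ strictly increases the transcendence degree, i.e., that $\{f_{1,2}, f_{2,3}, \dots, f_{n-1,n}, f_{n,1}\}$ is algebraically independent over $K$.

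To prove this independence, I would mimic the specialization trick used in the proof of Proposition~\ref{corr-leaf}. Suppose $P(f_{1,2}, \dots, f_{n-1,n}, f_{n,1}) = 0$ for some polynomial $P \in K[X_1, \dots, X_n]$; write $P = \sum_{k=0}^r Q_k(X_1, \dots, X_{n-1}) X_n^k$ with $Q_k \in K[X_1, \dots, X_{n-1}]$, and assume $P \ne 0$. Substituting and regarding the resulting polynomial identity in $S$ as a polynomial in a suitably chosen variable (or after a suitable specialization of some of the $x_i, y_i$ to elements of the infinite field $K$), I want to isolate $f_{n,1}$ and conclude each $Q_k$ vanishes when evaluated at $f_{1,2}, \dots, f_{n-1,n}$, hence each $Q_k = 0$ by the already-established independence of the path generators, hence $P = 0$. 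Concretely, one good specialization is to set $x_1 = 0$ (and perhaps $y_2 = x_2$ or similar), chosen so that $f_{n,1}$ becomes, up to a nonzero factor, a variable that does not appear in $f_{1,2}, \dots, f_{n-1,n}$ after the same specialization — for instance after setting $x_1 = 0$, $f_{n,1} = -x_ny_1$ involves $y_1$ while the remaining substituted path generators can be arranged not to involve $y_1$, so the equation becomes a polynomial in $y_1$ whose coefficients must separately vanish.

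The main obstacle is organizing this specialization cleanly: I need to pick values for enough of the $2n$ variables so that (a) $f_{n,1}$ specializes to something algebraically independent of the specialized path generators in a transparent way (ideally a lone variable times a nonzero scalar), while simultaneously (b) the specialized path generators $f_{1,2}, \dots, f_{n-1,n}$ remain algebraically independent over $K$. Point (b) is the delicate one, since careless substitution could collapse the transcendence degree of the path part. A safe route is to substitute values only in the variables $x_1, y_1$ attached to vertex $1$ (the vertex shared by the two "extra-relevant" edges $\{1,2\}$ and $\{n,1\}$): setting $x_1 \mapsto a \in K$ generic leaves $f_{1,2} = ay_2 - x_2 y_1$ and $f_{n,1} = x_n y_1 - a y_n$; then a further generic substitution $y_1 \mapsto b$ turns $f_{n,1}$ into $bx_n - ay_n$, still genuinely involving $x_n$, while the path generators $f_{2,3}, \dots, f_{n-1,n}$ are untouched and $f_{1,2} = ay_2 - bx_2$ still involves $x_2$. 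One then checks — by an inductive argument on the path, or by a direct Jacobian/triangularity computation — that these $n-1$ specialized path generators together with $bx_n - ay_n$ are algebraically independent, which forces $P = 0$. Alternatively, and perhaps more slickly, one can invoke the Jacobian criterion: show the $n \times 2n$ Jacobian matrix of $(f_{1,2}, \dots, f_{n-1,n}, f_{n,1})$ with respect to $x_1, \dots, x_n, y_1, \dots, y_n$ has rank $n$ at a generic point (equivalently over the fraction field), which by the standard characteristic-zero Jacobian criterion — or its characteristic-$p$ analogue available because we only need generic rank — gives algebraic independence directly; the cyclic structure makes this Jacobian a circulant-like matrix whose generic rank is readily seen to be full.
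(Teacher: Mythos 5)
Your overall strategy---show that all $n$ generators $f_{1,2},\dots,f_{n-1,n},f_{n,1}$ are algebraically independent over $K$ and invoke Corollary~\ref{intro-spread}---is a genuinely different route from the paper's. The paper's proof is a two-line squeeze: it cites a known computation of the arithmetic rank, $\operatorname{ara}(J_{C_n})=n$, and concludes from $n=\operatorname{ara}(J_{C_n})\le\ell(J_{C_n})\le\mu(J_{C_n})=n$ using Remark~\ref{rmk-ara}. Your reduction is sound (the upper bound $\ell(J_{C_n})\le n$ and the independence of the $n-1$ path generators via Corollary~\ref{corr-tree} are both correct), and the independence of all $n$ generators is in fact true, so your approach can be made to work and has the virtue of being self-contained.

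However, as written the crux is left unexecuted, and both concrete devices you offer have problems. First, the trick of Proposition~\ref{corr-leaf} does not transfer: there the new generator $F=x_iy_n-x_ny_i$ contains a fresh variable $x_n$ occurring in no other generator, which is exactly what permits expanding in powers of that variable and killing coefficients. The cycle-closing edge $f_{n,1}$ introduces no new variable---both of its endpoints already occur in $f_{1,2}$ and $f_{n-1,n}$---and your proposed specialization $x_1=0$ fails because $f_{1,2}$ then also becomes $-x_2y_1$, so $y_1$ cannot be isolated in $f_{n,1}$ alone. The substitution $x_1\mapsto a$, $y_1\mapsto b$ is better, but the specialized path generators are then no longer covered by Corollary~\ref{corr-tree}, and their independence together with $bx_n-ay_n$ is precisely what remains to be proved; ``one then checks'' is the entire content of the lemma. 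Second, the Jacobian is not ``readily seen'' to have full generic rank: at the natural test point $x_i=1$, $y_i=0$ the $n$ gradient rows are $e_{y_{i+1}}-e_{y_i}$ (indices mod $n$) and sum to zero, so the rank there is only $n-1$. One must exhibit a better point (for instance, after your substitution $x_1\mapsto a$, $y_1\mapsto b$, the point $x_i=1$, $y_i=0$ for $i\ge 2$ does yield rank $n$, since the first and last rows are the only ones with nonzero $x_2$- and $x_n$-entries and the middle rows form an independent path system on the $y$-coordinates), and in positive characteristic one should state explicitly that the generic Jacobian rank is a lower bound for the transcendence degree. Completing either computation would finish your argument; the paper's citation of the arithmetic rank avoids all of it.
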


\begin{proof} By \cite[Theorem 3.7]{kats}, $\operatorname{ara}(J_{C_n}) = n$.  Since $\mu(J_{C_n}) = n$ and $\operatorname{ara}(J_{C_n}) \le \ell(J_{C_n}) \le \mu(J_{C_n})$, we are done.
\end{proof}
\begin{theorem}\label{thm-uni}
    Let $G=(V,E)$ be a connected unicyclic graph. Then
    $
        \ell(J_G) = |E|.
    $
\end{theorem}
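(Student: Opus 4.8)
The plan is to prove Theorem~\ref{thm-uni} by reducing to the already-established cases of cycles (Lemma~\ref{lem-cycle}) and leaf-additions (Proposition~\ref{corr-leaf}). The key structural observation is that a connected unicyclic graph $G$ is obtained from its unique cycle $C_m$ by successively attaching leaves: since $G$ has exactly one cycle, deleting a non-cycle edge incident to a degree-one vertex keeps the graph connected and unicyclic, and by induction on $|E|$ one peels $G$ down to $C_m$. More precisely, I would argue that a connected unicyclic graph that is not itself a cycle must contain a leaf (a vertex of degree one): if every vertex had degree $\geq 2$, a standard counting/handshaking argument forces $|E|\geq |V|$, and combined with connectivity ($|E|\geq |V|-1$) and the unicyclic hypothesis ($|E|=|V|$ exactly when there is one cycle, $|E|=|V|-1$ for a tree), one sees the only degree-$\geq 2$ unicyclic graphs are the cycles themselves.

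First I would set up the induction on the number $k$ of vertices not lying on the cycle $C_m$ (equivalently $n = m+k$). The base case $k=0$ is exactly $G = C_m$, handled by Lemma~\ref{lem-cycle}, giving $\ell(J_{C_m}) = m = |E(C_m)|$. For the inductive step, assuming $k \geq 1$, I would locate a leaf $v$ of $G$: since $G \neq C_m$ and $G$ is connected and unicyclic, such a leaf exists by the degree argument above (a vertex not on the cycle that is "farthest" from the cycle, or simply any degree-one vertex, which must exist). Let $G_0 = G \setminus \{v\}$; then $G_0$ is connected, still contains $C_m$, is still unicyclic, and $G$ is obtained from $G_0$ by adding a leaf at the neighbor of $v$. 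By the inductive hypothesis $\ell(J_{G_0}) = |E(G_0)| = |E(G)| - 1$, and by Proposition~\ref{corr-leaf}, $\ell(J_G) = \ell(J_{G_0}) + 1 = |E(G)|$, completing the induction.

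The main obstacle — really the only nontrivial point — is verifying cleanly that a connected unicyclic graph other than a cycle has a leaf, and that leaf-deletion preserves the "connected unicyclic" property so the induction can proceed. The former follows from the handshaking lemma: if $\delta(G) \geq 2$ then $2|E| = \sum_v \deg(v) \geq 2|V|$, so $|E| \geq |V|$; but a connected unicyclic graph satisfies $|E| = |V|$ with equality forcing every vertex to have degree exactly $2$, i.e.\ $G$ is $2$-regular and connected, hence a single cycle. The latter is immediate: removing a degree-one vertex from a connected graph leaves it connected, removes exactly one edge, and cannot destroy or create a cycle since $v$ lies on no cycle.

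I would also remark that Theorem~\ref{thm-main} then follows by combining Theorem~\ref{thm-uni} with Corollary~\ref{corr-tree} and the additivity formula Theorem~\ref{thm-components}: an arbitrary pseudo-forest has connected components that are each either a tree or a unicyclic graph, so $\ell(J_G) = \sum_i \ell(J_{G_i}) = \sum_i |E(G_i)| = |E|$.
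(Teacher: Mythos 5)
Your proof is correct and follows essentially the same route as the paper: reduce to the unique cycle $C_m$ via Lemma~\ref{lem-cycle} and account for the remaining edges one at a time with Proposition~\ref{corr-leaf}. The only difference is that you supply the graph-theoretic justification (every connected unicyclic non-cycle has a leaf, and leaf-deletion preserves the hypotheses) that the paper asserts without detail.
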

\begin{proof}
Let $r = |E|$. Let $i$ be the length of the cycle $C_i$ in $G$. The graph $C_i$ has $i$ edges.  The graph $G$ is then
formed by adding $r-i$ edges, one at a time.  Furthermore, we can build $G$ from $C_i$ by insuring that the edge we add each step is a leaf.
We have $\ell(J_{C_i})=i$ by Lemma~\ref{lem-cycle}. Then by the Proposition~\ref{corr-leaf}
\begin{equation*}
    \ell(J_G) = \ell(J_{C_i}) + l
    = i + (r-i) = r.
\end{equation*}This completes the proof of the theorem.
\end{proof}
\begin{definition}
    A graph $G$ is called a pseudo-forest if every connected component of $G$ has at most one cycle.
\end{definition}

Now we recall and prove Theorem~\ref{thm-main}.

\begin{theorem}\label{thm-main}
    Let $G$ be a pseudo-forest. Then $\ell(J_G)$ is the number of edges in $G$. In particular, if $G$ is a forest or a unicyclic graph, then the $\ell(J_G)$ is the number of edges in $G$.
\end{theorem}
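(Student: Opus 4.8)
The plan is to reduce the general pseudo-forest case to the connected case already handled. By the definition of a pseudo-forest, each connected component $G_1,\ldots,G_t$ of $G$ is a pseudo-tree, that is, a connected graph with at most one cycle, hence each $G_i$ is either a tree or a connected unicyclic graph. Applying Corollary~\ref{corr-tree} when $G_i$ is a tree, and Theorem~\ref{thm-uni} when $G_i$ is unicyclic, we obtain $\ell(J_{G_i}) = |E_i|$ in either case, where $E_i$ is the edge set of $G_i$.

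Next I would invoke the additivity formula of Theorem~\ref{thm-components}, which gives
\begin{equation*}
    \ell(J_G) = \sum_{i=1}^t \ell(J_{G_i}) = \sum_{i=1}^t |E_i| = |E|,
\end{equation*}
the last equality holding because the edge sets of the connected components partition $E$. This establishes the claim that $\ell(J_G)$ equals the number of edges of $G$. The ``in particular'' statement is immediate: a forest is a pseudo-forest each of whose components is a tree, and a unicyclic graph is already connected with exactly one cycle, hence a pseudo-forest with a single component, so both cases are subsumed.

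Strictly speaking, one should also address the trivial components: an isolated vertex (a component with no edges) contributes $J_{G_i} = (0)$, for which the special fiber ring is $K$ and $\ell = 0 = |E_i|$, so such components cause no trouble in the sum. There is no real obstacle here — the entire content of Theorem~\ref{thm-main} has been front-loaded into Theorem~\ref{thm-components} (additivity over components), Corollary~\ref{corr-tree} (the tree case, via the leaf-addition Proposition~\ref{corr-leaf}), and Theorem~\ref{thm-uni} (the unicyclic case, via Lemma~\ref{lem-cycle} and again Proposition~\ref{corr-leaf}); the proof is simply the assembly of these three pieces, and the only thing to be careful about is correctly matching each component of a pseudo-forest to one of the two known cases and checking that the edge counts add up.
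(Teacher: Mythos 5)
Your proof is correct and follows exactly the paper's route: decompose into connected components, apply Corollary~\ref{corr-tree} to tree components and Theorem~\ref{thm-uni} to unicyclic components, and sum via the additivity formula of Theorem~\ref{thm-components}. Your extra remark about isolated vertices is a sensible (and harmless) addition that the paper's one-line proof leaves implicit.
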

\begin{proof}
    The result follows from Corollary~\ref{corr-tree}, Theorem~\ref{thm-uni} and Lemma~\ref{thm-components}.
\end{proof}


\section{Analytic Spread of Closed Graphs and Newton-Okounkov Regions}\label{sec-6}

In this section, we use convex regions constructed from the binomial edge ideal of a closed graph to understand its analytic spread. We shall consider the Newton-Okounkov region associated to a graded family of monomial ideals that was constructed in \cite{HN1, KK2}. This construction is closely related to that of the Newton-Okounkov body, which was investigated in \cite{KK}. However, Newton-Okounkov regions are generally not compact, as opposed to Newton-Okounkov bodies. We also classify all closed graphs $G$ for which the analytic spread of $J_G$ is equal to the number of edges of $G$.

Recall that a \emph{graded family} of ideals in $S$ is a collection $\cI = \{I_i\}_{i \ge 0}$ of ideals such that $I_0 = R$ and $I_p \cdot I_q \subseteq I_{p+q}$ for all $p,q \ge 1$. For $a = (a_1, \dots, a_{2n}) \in \Z_{\ge 0}^{2n}$, we shall use $X^a$ to denote the monomial $x_1^{a_1} x_2^{a_2} \cdots y_{n-1}^{a_{2n-1}}y_n^{a_{2n}}$ in $S$. If $\cI = \{I_i\}_{i \ge 0}$ is a graded family of ideals in $S$, then we define the \emph{Rees algebra of $\cI$} to be $R[\mathcal{I}] = \bigoplus_{i\geq 0}I_it^i$. The \emph{special fiber ring} of $\mathcal{I}$ is $F(\mathcal{I}):= R[\mathcal{I}]/mR[\mathcal{I}]$. The \emph{analytic spread} $\ell(\mathcal{I})$ is $\dim F(\mathcal{I})$; see \cite{spread1} and \cite{spread2} for a discussion of analytic spread of graded families.
 
\begin{definition} Let $\cI = \{I_i\}_{i \ge 0}$ be a graded family of monomial ideals in $S$. The \emph{Newton-Okounkov} region of $\cI$ is defined as
        $$\Delta(\cI) = \overline{\bigcup_{k \ge 1} \text{convex hull} \left\{ \dfrac{a}{k} ~\Big|~ X^a \in I_k\right\}}\subset \mathbb{R}^{2n}.$$
Moreover, let $\mathrm{mcd}(\Delta)$ be the {\it maximum dimension of a compact face of} a polyhedron $\Delta$.
\end{definition}

Consider the lexicographical monomial ordering on $S = K[x_1, \ldots, x_n, y_1, \ldots, y_n]$ induced by the prescription that $x_1 > x_2 > \dots > y_1 > \dots > y_n$. For an ideal $I \subseteq S$, we shall use $\text{in}_<(I)$ to denote the initial ideal of $I$ under this monomial ordering.

\begin{definition}
    Let $G$ be a graph and let $J_G$ be its binomial edge ideal. Set $\cI = \{\text{in}_<(J_G^i)\}_{i \ge 0}$. We define
    \begin{equation}\label{eq: NO body}
        \Delta(J_G) = \Delta(\cI).
    \end{equation}
\end{definition}

\begin{remark}
    In general, Newton-Okounkov regions are constructed with respect to a \emph{good} valuation (cf. \cite{KK2}). It is not hard to see that the Newton-Okounkov region of the graded family $\{J_G^i\}_{i \ge 0}$, constructed from the Gr\"obner valuation of $S$ with respect to the reverse lex ordering, where $y_n > \dots > y_1 > x_n > \dots > x_1$, coincides with $\Delta(J_G)$.
\end{remark}

Recall from \cite[Theorem 1.1]{HHHKR} that a graph $G$ is \emph{closed} if the binomial edge ideal $J_G$ has a quadratic Gr\"obner basis with respect to a diagonal term order. This class of graphs has received considerable attention; see, for instance, \cite{ene2022powers, PeevaClosed}. We are now ready to prove our next result giving a precise formula for the analytic spread of the binomial edge ideal of closed graphs. 

\begin{theorem}\label{thm-closed1}
    Let $G$ be a closed graph. Then $\ell(J_G) = \mathrm{mcd}(\Delta(J_G))+ 1$.
\end{theorem}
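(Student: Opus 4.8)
The plan is to connect the analytic spread $\ell(J_G)$, which by Corollary~\ref{intro-spread} equals $\trdeg_K K(f_{i,j} : \{i,j\}\in E)$, to the combinatorics of the Newton--Okounkov region $\Delta(J_G)$ by passing through the initial ideal. First I would invoke the result of Ene--Rinaldo--Terai \cite{ene2022powers}: when $G$ is closed, the diagonal term order gives a quadratic Gr\"obner basis of $J_G$, and moreover $\operatorname{in}_<(J_G^i) = (\operatorname{in}_<(J_G))^i$ for all $i$, so that $\cI = \{\operatorname{in}_<(J_G^i)\}_{i\ge 0}$ is the ordinary power filtration of the squarefree monomial ideal $\operatorname{in}_<(J_G)$; in particular they show $\ell(J_G) = \ell(\operatorname{in}_<(J_G))$. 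Thus the theorem reduces to a purely combinatorial statement: for the monomial ideal $M = \operatorname{in}_<(J_G)$, one has $\ell(M) = \mathrm{mcd}(\Delta(M)) + 1$, where $\Delta(M)$ is the ordinary Newton--Okounkov body (now a \emph{polyhedron}, possibly unbounded) of $M$.

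Next I would identify $\Delta(M)$ explicitly. Since $M$ is generated by the monomials $x_i y_j$ for the edges $\{i,j\}$ with $i<j$ of the closed graph, $\Delta(M)$ is the closure of $\bigcup_k \frac{1}{k}\operatorname{conv}\{a : X^a \in M^k\}$, which — because $M$ is generated in degree $2$ and is a monomial ideal — equals $\operatorname{conv}\{\text{exponent vectors of generators of } M\} + \R_{\ge 0}^{2n}$, the Newton polyhedron of $M$. So $\Delta(M) = P_M := \operatorname{conv}\{e_i + e_{n+j} : \{i,j\}\in E, i<j\} + \R_{\ge 0}^{2n}$. The compact faces of $P_M$ are exactly the faces of the bounded polytope $Q_M := \operatorname{conv}\{e_i + e_{n+j}\}$ that remain faces of $P_M$ after Minkowski-summing with the cone — equivalently the faces of $P_M$ on which every linear functional achieving its minimum is strictly positive on all coordinate directions not "used up." The key input from the literature is the theorem of Bivi\`a-Ausina \cite{bivia2003analytic} (or the Herzog--Hibi type description) that for a monomial ideal generated in a single degree, $\ell(M)$ equals $1 + \mathrm{mcd}$ of its Newton polyhedron; I would either cite this directly or reprove it via Corollary~\ref{intro-spread}, noting $F(M) = K[\text{monomial generators}]$ is a toric ring whose dimension is the affine dimension of the point configuration $\{e_i + e_{n+j}\}$, and then match that dimension with $1 + \mathrm{mcd}(P_M)$ by a standard argument: the affine span of the lattice points equals one plus the dimension of the lattice points' \emph{projective} configuration, and a top-dimensional compact face of the Newton polyhedron is precisely the convex hull of an affinely-spanning subset lying on a common supporting hyperplane with positive normal, whose dimension is therefore $\ell(M) - 1$.

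I would then assemble the argument: $\ell(J_G) \overset{\text{\cite{ene2022powers}}}{=} \ell(M) = \dim K[x_iy_j : \{i,j\}\in E] = (\text{affine dimension of } \{e_i+e_{n+j}\}) $, and separately $\mathrm{mcd}(\Delta(J_G)) = \mathrm{mcd}(P_M) = (\text{affine dimension of } \{e_i+e_{n+j}\}) - 1$, the last equality because the whole configuration lies on the hyperplane $\sum_{k\le n} t_k = 1$ (coordinates in the $x$-block) which, together with $\sum_{k>n} t_k = 1$, supports a compact face of $P_M$ containing all the generators — so the maximal compact face already has the full affine dimension minus one. Combining gives $\ell(J_G) = \mathrm{mcd}(\Delta(J_G)) + 1$.

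\textbf{Main obstacle.} The delicate point is the clean identification $\mathrm{mcd}(\Delta(J_G)) = \ell(J_G) - 1$ at the level of the Newton polyhedron: one must verify that the maximal-dimensional \emph{compact} face is not smaller than the affine dimension of the generator configuration minus one, i.e. that no genuine "recession direction" eats into the top compact face. Here I would use that every generator exponent has coordinate sum exactly $1$ in each of the $x$- and $y$-blocks, so the functional $\sum_{k\le n} t_k + \sum_{k>n} t_k$ is constant (equal to $2$) on all generators and strictly increasing along every coordinate ray; its minimum face on $P_M$ is therefore the compact polytope $Q_M$ itself, which has dimension equal to the affine dimension of the configuration minus one. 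Controlling this — and making sure the cited monomial-ideal analytic-spread formula is applied with the hypotheses ("generated in one degree", "$K$ infinite") that we indeed have — is the crux; everything else is bookkeeping with Corollary~\ref{intro-spread} and Remark~\ref{rmk-trdeg}.
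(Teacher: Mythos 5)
Your proposal follows essentially the same route as the paper: for closed $G$ you use the Ene--Rinaldo--Terai facts that $\operatorname{in}_<(J_G^k)=(\operatorname{in}_<(J_G))^k$ and $\ell(J_G)=\ell(\operatorname{in}_<(J_G))$, so that $\Delta(J_G)$ becomes the Newton polyhedron of the monomial ideal $\operatorname{in}_<(J_G)$, and then you invoke the monomial-ideal identity $\ell(M)=\mathrm{mcd}(\Delta(M))+1$, which is exactly what the paper does by citing \cite[Theorem 4.1]{HN1}. Your argument is correct; the only (harmless) difference is that you offer to reprove the equigenerated monomial case via the toric/supporting-hyperplane argument instead of citing it.
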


\begin{proof}
    Since $G$ is closed, ${\rm in}_{<}(J_G^k) = 
    ({\rm in}_{<}(J_G))^k$ by \cite[Equation (3)]{ene2022powers}. Then from Equation~ \ref{eq: NO body} we get, $\Delta(J_G) = \Delta({\rm in}_{<} (J_G))$. Since $G$ is closed, by \cite[Theorem 3.10]{ene2022powers}, $\ell(J_G) = 
    \ell({\rm in}_{<}(J_G))$. Note that if $\mathcal{I}$ is a graded family of ordinary powers of an ideal $I$, then $\ell(\mathcal{I}) = \ell(I)$. Therefore by \cite[Theorem 4.1]{HN1}, $\ell(J_G) = 
    \ell({\rm in}_{<}(J_G)) = 
    \mathrm{mcd}(\Delta({\rm in}_{<}(J_G)))+ 1 = \mathrm{mcd}(\Delta(J_G))+1$. 
\end{proof}

\begin{corollary}\label{cor: mcd}
    Let $\mathcal{I}$ be the family of symbolic powers of $J_G$. If $G$ is closed, then $\ell(\mathcal{I}) = \mathrm{mcd}(\Delta(J_G))+ 1$.
\end{corollary}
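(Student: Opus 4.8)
The plan is to deduce Corollary~\ref{cor: mcd} directly from Theorem~\ref{thm-closed1}, using the fact that for a closed graph $G$ the symbolic and ordinary powers of $J_G$ agree. Granting that $J_G^{(s)} = J_G^s$ for all $s \geq 1$, the family $\mathcal{I} = \{J_G^{(s)}\}_{s \geq 0}$ is nothing but the graded family of ordinary powers of $J_G$, so by the elementary fact recorded inside the proof of Theorem~\ref{thm-closed1} — that the analytic spread of a graded family of ordinary powers of an ideal equals the analytic spread of that ideal — we get $\ell(\mathcal{I}) = \ell(J_G)$, and Theorem~\ref{thm-closed1} then gives $\ell(\mathcal{I}) = \ell(J_G) = \mathrm{mcd}(\Delta(J_G)) + 1$. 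So the whole content of the corollary is the equality $J_G^{(s)} = J_G^s$ for closed $G$.

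To establish that equality I would argue as follows. Recall first that $J_G$ is radical for every graph $G$ (its initial ideal under a diagonal term order is squarefree; see \cite{HHHKR}), so $\operatorname{Ass}(S/J_G) = \operatorname{Min}(J_G)$; consequently it suffices to show that $S/J_G^s$ has no embedded associated primes for each $s$, since then the primary decomposition of $J_G^s$ involves only the minimal primes of $J_G$, each primary component equals the contraction of the corresponding localization, and intersecting these contractions is exactly $J_G^{(s)}$. Now for closed $G$ one has $\operatorname{in}_<(J_G^s) = (\operatorname{in}_<(J_G))^s$ by \cite[Equation (3)]{ene2022powers}, and $\operatorname{in}_<(J_G) = (x_iy_j \mid \{i,j\} \in E,\ i < j)$ is the edge ideal of a bipartite graph on the vertices $x_1,\dots,x_n,y_1,\dots,y_n$, hence is normally torsion-free by the classical theorem of Simis, Vasconcelos and Villarreal; in particular $(\operatorname{in}_<(J_G))^s$ has no embedded primes for any $s$. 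The remaining point is to transfer this absence of embedded primes from $\operatorname{in}_<(J_G^s)$ back up to $J_G^s$; here I would invoke \cite{ene2022powers}, where the normal torsion-freeness (equivalently $J_G^{(s)} = J_G^s$) of the binomial edge ideal of a closed graph is worked out.

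The main obstacle is exactly this transfer step. It is not a formal consequence of the squarefree Gr\"obner degeneration: normally torsion-free monomial ideals can fail to have Cohen--Macaulay powers (already for the edge ideal of a four-cycle), so one cannot simply appeal to ``Cohen--Macaulayness ascends along squarefree degenerations''; one really needs the finer structure of $J_G$ for closed $G$ — for instance that the Rees algebra of $J_G$ is well behaved in this case — to conclude that $J_G^s$ inherits the unmixedness of $(\operatorname{in}_<(J_G))^s$. Once $J_G^{(s)} = J_G^s$ is in hand, the corollary is immediate bookkeeping on top of Theorem~\ref{thm-closed1}.
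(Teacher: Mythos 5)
Your proposal is correct and matches the paper's proof: the paper likewise reduces the corollary to the equality $J_G^{(k)} = J_G^k$ for closed graphs, cites \cite[Equation (4)]{ene2022powers} for that equality, and then applies Theorem~\ref{thm-closed1}. Your attempted from-scratch derivation of $J_G^{(s)} = J_G^s$ via the bipartite initial ideal is an (acknowledged) incomplete digression, but since you ultimately fall back on the same citation, the argument stands.
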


\begin{proof}
    Since $G$ is closed, by \cite[Equation (4)]{ene2022powers}, we have $J_G^k = J_G^{(k)}$, where $J_G^{(k)}$ is the $k$th symbolic power of $J_G$. Hence the proof follows.
\end{proof}

The paper ends with Corollary~\ref{cor: Pluck and bound}, in which we recover a result of Kumar \cite{kumar2022rees} in fully characterizing $K_4$-free closed graphs based on the analytic spread of their binomial edge ideals. We say that a graph $G$ is $K_4$-free if $G$ does not contain $K_4$ as an induced subgraph. We introduce enough definitions and notation to recall Theorem~\ref{thm: rees} from \cite{ALMOUSA2024}. 
\begin{definition}\label{def: All Reese}
    Let $G=(V,E)$ be any graph on $n\geq 4$ vertices. One can define the following ring and ideal associated to $G$. Recall that $S=K[x_1,\ldots,x_n,y_1,\ldots,y_n]$.
    Set
    \begin{equation*}
    \begin{split}        
        S_G &= S[T_{ij}\mid \{i,j \} \in E(G)],\\
        I_{G} &= (T_{i_1i_2}T_{i_3i_4}+T_{i_2i_3}T_{i_1i_4}-T_{i_1i_3}T_{i_2i_4} \mid  \{i_1,i_2,i_3,i_4\} \subseteq V \text{ induce a $K_4$ subgraph of $G$} ) \subseteq S_G,\\
        F_G&=\frac{K[T_{ij}\mid \{i,j \} \in E(G)]}{I_G \cap K[T_{ij}\mid \{i,j \}\in E(G)]}.
    \end{split}
    \end{equation*} 
    Define $\varphi$ as the surjective map
    \begin{equation}\label{def: Reese Map}     
        \varphi:S_G \twoheadrightarrow S[J_Gt]=\bigoplus_{n\geq 0}J_G^nt^n,
    \end{equation} 
where $\varphi(T_{i,j})=f_{i,j}t$ and $\varphi(s)=s$ for all $s$ in $S$. The kernel of $\varphi$ is called the {\it presentation ideal of $J_G$}.
\end{definition}

Note that {\cite[Theorem 4.5]{ALMOUSA2024}} is stated for a more general class of ideals that includes binomial edge ideals, see \cite[Definition 2.1]{ALMOUSA2024}. We have tailored the statement to our setting. Recall that $F(J_G)=S[J_Gt]/\mathfrak{m}S[J_Gt]$.
\begin{theorem}[{\cite[Theorem 4.5]{ALMOUSA2024}}]\label{thm: rees}
Let $G$ be a closed graph with maximal cliques $\Delta_1, \ldots, \Delta_s$ that cover all vertices of $G$. For $\{i,j\}\in E(G)$ write $\{i,j\}\in \Delta_a$ if $a$ is the smallest index of a clique that contains $\{i,j\}$. The presentation ideal of $J_G$ is generated by:
\begin{enumerate}
\item $f_{ij}T_{i'j'}-f_{i'j'}T_{ij}$ for  $\{i,j\}\in\Delta_a, \{i',j'\}\in \Delta_b$ for $a\neq b$  {\em (Koszul relations)},
\item  $x_iT_{jk}-x_jT_{ik}+x_kT_{ij}$  and $y_iT_{jk}-y_jT_{ik}+y_kT_{ij}$ for $\{i,j,k\}$  an induced $C_3$ in $G$ such that $i<j<k$ {\em (Eagon-Northcott relations)},
\item $T_{ij}T_{kl} -T_{ik}T_{jl} + T_{il}T_{jk}$ where $1\leq i<j<k<l\leq n$ and $\{i, j, k, l\}$  induce a $K_4$ in $G$ 
{\em (Pl\"ucker relations)}.
\end{enumerate}
In particular, as $K$-algebras,
 \begin{equation}\label{eq: fib cone to Pluck}
F_G\cong F(J_G). 
    \end{equation}
\end{theorem}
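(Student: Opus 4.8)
The plan is to determine the presentation ideal $K=\ker\varphi$ of $\varphi\colon S_G\twoheadrightarrow S[J_Gt]$ explicitly, and then read off $F(J_G)$ as its ``$T$-only'' quotient. The essential structural input is that a closed graph decomposes into clique pieces: after fixing a labeling witnessing closedness, the maximal cliques $\Delta_1,\dots,\Delta_s$ are intervals, and for each $a$ the generators $f_{ij}$ with $i,j\in\Delta_a$ are exactly the $2\times 2$ minors of the generic $2\times|\Delta_a|$ matrix whose columns are the pairs $(x_i,y_i)$, $i\in\Delta_a$. In other words, $J_G$ is obtained by gluing ideals of maximal minors of generic $2\times m$ matrices along overlapping column sets. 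For a single such ideal the Rees algebra is classical: its presentation ideal is generated by the Eagon--Northcott relations $x_iT_{jk}-x_jT_{ik}+x_kT_{ij}$, $y_iT_{jk}-y_jT_{ik}+y_kT_{ij}$ together with the Pl\"ucker relations $T_{ij}T_{kl}-T_{ik}T_{jl}+T_{il}T_{jk}$, and its fiber cone is the affine cone over the Grassmannian $\mathrm{Gr}(2,m)$ (consistently with $\ell(J_{K_m})=2m-3$ from Theorem~\ref{thm-kn}). So the real content of the statement is that within each clique only these relations are needed, that generators coming from two different cliques satisfy only the obvious Koszul syzygy $f_{ij}T_{i'j'}-f_{i'j'}T_{ij}$, and that no further relations arise from the gluing.

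To make this precise I would argue by Gr\"obner degeneration. Fix the diagonal term order on $S$ for which $G$ closed yields a quadratic squarefree initial ideal with $\mathrm{in}_{<}(J_G^k)=(\mathrm{in}_{<}(J_G))^k$ for all $k$ (the equality already used in the proof of Theorem~\ref{thm-closed1}), and extend it to a term order on $S_G=S[T_{ij}]$ compatible with $\varphi$ in the standard way that transports Gr\"obner bases from an ideal to the defining ideal of its Rees algebra (so that $\mathrm{in}(\varphi(T_{ij}))=\mathrm{in}(f_{ij})$). One then checks that the leading terms of the listed Koszul, Eagon--Northcott, and Pl\"ucker elements generate $\mathrm{in}(K)$, by verifying that the relevant $S$-polynomials reduce to zero; the interval structure of the maximal cliques, and the rigidity of how any two of them can overlap, is what keeps this reduction finite and uniform. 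Since elements of $K$ whose leading terms generate $\mathrm{in}(K)$ necessarily generate $K$ (and form a Gr\"obner basis), this gives exactly statements (1)--(3). A more homological alternative would assemble the syzygies from Eagon--Northcott complexes attached to the clique pieces and show the Rees algebra is of fiber type with the predicted equations; the Gr\"obner route is likely the most self-contained.

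Granting the description of $K$, the isomorphism $F_G\cong F(J_G)$ is bookkeeping. By definition $F(J_G)=S[J_Gt]/\mathfrak m S[J_Gt]$ with $\mathfrak m=(x_1,\dots,x_n,y_1,\dots,y_n)$, and reducing mod $\mathfrak m$ annihilates every generator in families (1) and (2), since each such generator is a multiple of some $x_\bullet$ or $y_\bullet$, while the Pl\"ucker trinomials of family (3) survive verbatim. Equivalently, $F(J_G)$ is the image of $K[T_{ij}\mid\{i,j\}\in E(G)]$ in $S[J_Gt]/\mathfrak m S[J_Gt]$, hence equals $K[T_{ij}]/(K\cap K[T_{ij}])$; and since the only generators of $K$ lying in $K[T_{ij}]$ are the Pl\"ucker relations, $K\cap K[T_{ij}]=I_G\cap K[T_{ij}]$, which is precisely the defining ideal of $F_G$ in Definition~\ref{def: All Reese}. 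The one point deserving care here is the identity $K\cap K[T_{ij}]=(\text{Pl\"ucker relations})$: it is an elimination statement, to be justified by arranging the $T$-weights so that $K[T_{ij}]$ is an elimination subring, i.e.\ the Gr\"obner basis of $K$ restricts to a Gr\"obner basis of $K\cap K[T_{ij}]$.

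I expect the main obstacle to be the completeness half of the second step --- that \emph{no} relations beyond families (1)--(3) occur. Configurations of three or more pairwise overlapping maximal cliques are where hidden syzygies could a priori appear, and ruling them out is exactly where the combinatorics of closed graphs (interval neighborhoods, absence of long induced cycles, the restricted ways cliques can meet) must be used decisively; it is also where one leans most heavily on the classical determinantal description of the single-clique Rees algebras.
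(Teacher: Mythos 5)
This statement is not proved in the paper at all: it is quoted verbatim as \cite[Theorem 4.5]{ALMOUSA2024}, so there is no internal proof to compare your argument against. Judged on its own terms, your proposal is a reasonable high-level strategy (and is in the spirit of how such presentations of Rees algebras are usually established), but it is a plan rather than a proof, and the gap sits exactly at the theorem's core.

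Concretely: the entire content of the statement is that the Koszul, Eagon--Northcott, and Pl\"ucker elements generate \emph{all} of $\ker\varphi$, and your argument for this is ``one then checks that the relevant $S$-polynomials reduce to zero,'' which is never carried out --- you yourself flag the completeness half as ``the main obstacle.'' Moreover, the Gr\"obner step as stated is logically incomplete even in outline: showing that the $S$-polynomials of your candidate set reduce to zero only proves that the candidates form a Gr\"obner basis of the ideal \emph{they} generate, i.e.\ that $\mathrm{in}(J')\subseteq\mathrm{in}(K)$ where $J'\subseteq K$ is the candidate ideal. To conclude $J'=K$ you additionally need the reverse containment $\mathrm{in}(K)\subseteq\mathrm{in}(J')$, which requires an independent handle on $\mathrm{in}(K)$ (typically via the transfer of Gr\"obner bases to Rees algebras, using $\mathrm{in}_<(J_G^k)=(\mathrm{in}_<J_G)^k$, plus a Hilbert-function or dimension comparison); this is precisely where the interval-clique combinatorics of closed graphs must do real work, and it is absent. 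Two smaller points: the single-clique input (Rees algebra of the $2\times 2$ minors of a generic $2\times m$ matrix presented by Eagon--Northcott and Pl\"ucker relations) is classical and fine to invoke, and your derivation of $F_G\cong F(J_G)$ from the presentation is essentially correct, except that the kernel of $K[T_{ij}]\to F(J_G)$ is $(K+\mathfrak m S_G)\cap K[T_{ij}]$ (equivalently the image of $K$ modulo $\mathfrak m S_G$), not $K\cap K[T_{ij}]$; with the explicit generators in hand these coincide with the Pl\"ucker ideal, but the elimination framing you give is the wrong one.
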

\begin{corollary}\label{cor: Pluck and bound}
    Let $G=(V,E)$ be a closed graph on $n$ vertices.
    \begin{enumerate}
        \item $\ell(J_G)\leq|E|$, and $\ell(J_G)=|E|$ if and only if $G$ contains no induced $K_4$.
        \item The dimension of $ F_G$ is equal to the following quantities:
        \begin{itemize}
            \item the number of algebraically independent elements of the set 
            \begin{equation*}
                \{ \overline{T_{ij}} \mid (i,j)=(i,i+1)\ \text{or}\ i=1\} \subseteq F_G,
            \end{equation*}
            \item $\mathrm{mcd}(\Delta(J_G))+1$.
        \end{itemize}
    \end{enumerate}
\end{corollary}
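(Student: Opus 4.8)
The plan is to derive both parts of Corollary~\ref{cor: Pluck and bound} from the structural description of the presentation ideal of $J_G$ in Theorem~\ref{thm: rees}, together with the identification $F_G \cong F(J_G)$ and the results already established for closed graphs. For part (1), the key observation is that the isomorphism $F_G \cong F(J_G)$ from \eqref{eq: fib cone to Pluck} realizes $F(J_G)$ as the quotient of the polynomial ring $K[T_{ij} \mid \{i,j\} \in E(G)]$ by the ideal generated by the \emph{Pl\"ucker relations} alone — the Koszul and Eagon-Northcott relations in Theorem~\ref{thm: rees} involve the variables $x_i, y_i$, so they die in $\mathfrak{m}S[J_Gt]$, and what survives is exactly $I_G \cap K[T_{ij}]$, which by Definition~\ref{def: All Reese} is cut out by the Pl\"ucker relations coming from induced $K_4$'s. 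Thus $\ell(J_G) = \dim F_G \le \mu(J_G) = |E|$, with equality if and only if the ideal $I_G \cap K[T_{ij}]$ is zero, i.e.\ there are no Pl\"ucker relations, i.e.\ $G$ contains no induced $K_4$. I would need to check one direction a little more carefully: that the presence of \emph{some} Pl\"ucker relation forces $\dim F_G < |E|$ rather than merely $\le |E|$. This follows because each Pl\"ucker relation $T_{ij}T_{kl} - T_{ik}T_{jl} + T_{il}T_{jk}$ is a nonzero irreducible polynomial, so modding out by a nonzero ideal of a polynomial domain strictly drops the dimension; since $F_G$ is a domain (it is a subring of $S[J_Gt]$, which is a domain), this is immediate.

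For part (2), the equality of $\dim F_G$ with $\mathrm{mcd}(\Delta(J_G)) + 1$ is just a restatement of Theorem~\ref{thm-closed1} combined with \eqref{eq: fib cone to Pluck}: $\dim F_G = \dim F(J_G) = \ell(J_G) = \mathrm{mcd}(\Delta(J_G)) + 1$. The remaining assertion is that $\dim F_G$ equals the maximal number of algebraically independent elements among the images $\overline{T_{ij}}$ with $(i,j) = (i,i+1)$ or $i = 1$. Here I would use that $F_G$, being a finitely generated domain over $K$ (Theorem~\ref{thm-kit} gives $F(J_G) \cong K[f_{ij} \mid \{i,j\}\in E]$, a domain), has dimension equal to $\trdeg_K \mathrm{QF}(F_G)$ by Remark~\ref{rmk-trdeg}. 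So it suffices to show that the designated subset $\{\overline{T_{ij}} \mid (i,j) = (i,i+1) \text{ or } i=1\}$ already generates $\mathrm{QF}(F_G)$ as a field extension of $K$ — equivalently, that every $\overline{T_{ij}}$ is algebraic over the subfield generated by this subset. Once that is known, Theorem~\ref{thm-transbase} gives a transcendence base inside this subset, and its cardinality is both $\trdeg_K \mathrm{QF}(F_G) = \dim F_G$ and the claimed "number of algebraically independent elements."

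The main obstacle is proving that reduced claim: every $\overline{T_{ij}}$ is algebraic over $K$ adjoined the special generators. For a closed graph, the standard fact (see \cite{HHHKR,PeevaClosed}) is that after relabeling vertices one may assume every edge $\{i,j\}$ lies in a maximal clique, and the maximal cliques are intervals; the edges of the form $\{i,i+1\}$ and $\{1,i\}$ are enough to "see" all vertices involved in each clique. Concretely, if $\{i,j,k,l\}$ induces a $K_4$ with $i<j<k<l$, the Pl\"ucker relation $\overline{T_{ij}}\,\overline{T_{kl}} = \overline{T_{ik}}\,\overline{T_{jl}} - \overline{T_{il}}\,\overline{T_{jk}}$ (valid since these relations hold in $F_G$ by Theorem~\ref{thm: rees}) lets one express a "long" generator like $\overline{T_{jl}}$ in terms of others with smaller index gaps, and iterating reduces every $\overline{T_{ij}}$ to a rational function in the consecutive ones $\overline{T_{i,i+1}}$ and the "first-row" ones $\overline{T_{1,i}}$. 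I would organize this as an induction on $j - i$: the base case $j - i = 1$ is trivial and $i = 1$ is included by hypothesis, and the inductive step uses a Pl\"ucker relation inside whichever maximal clique contains $\{i,j\}$ (such a clique contains enough vertices between $i$ and $j$ precisely because closed graphs have interval cliques), solving for $\overline{T_{ij}}$ in terms of $T$'s with strictly smaller gaps. Care is needed to ensure the relation used is genuinely available — i.e.\ that the relevant four vertices induce a $K_4$ — but this is guaranteed when they all lie in a common maximal clique. This establishes that $\mathrm{QF}(F_G)$ is algebraic over the subfield generated by the designated subset, completing the proof.
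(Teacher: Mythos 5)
Your proposal follows essentially the same route as the paper's proof: part (1) is derived from the isomorphism $F_G\cong F(J_G)$ of Theorem~\ref{thm: rees} together with the observation that an induced $K_4$ contributes a nonzero Pl\"ucker relation in the polynomial ring $K[T_{ij}\mid \{i,j\}\in E]$, forcing a strict drop in dimension, while part (2) chains $\dim F_G=\ell(J_G)=\mathrm{mcd}(\Delta(J_G))+1$ via Equation~\ref{eq: fib cone to Pluck} and Theorem~\ref{thm-closed1} and uses the Pl\"ucker relations to express every $\overline{T_{ij}}$ rationally in terms of the designated generators. The only cosmetic difference is that the paper performs this elimination by explicit chains $\{1,2,3,4\},\{1,2,4,5\},\dots$ rather than your induction on $j-i$, and, like your argument, it tacitly relies on the relevant quadruples actually inducing $K_4$'s in $G$.
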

\begin{proof}
$(1)$ Since $S_G$ is a polynomial ring, $G$ contains an induced $K_4$ if and only if $1\leq \text{ht}(I_G)$ and the result follows by Equation~\ref{eq: fib cone to Pluck}.\\
$(2)$ Consider the set $T=K[\overline{T_{ij}}\ :\ (i,j)=(i,i+1)\ \text{or}\ i=1 \text{ for } 1\leq i \leq n]\subseteq F_G$. Since $T_{12}T_{34}+T_{14}T_{23}-T_{13}T_{24}\in I_G$ then $\overline{T_{24}}\in \text{QF}(T)$. Similarly, since $T_{12}T_{45}+T_{15}T_{24}-T_{14}T_{25}\in I_G$ and $\overline{T_{24}}\in\text{QF}(T)$, then $\overline{T_{25}}\in\text{QF}(T)$. Continuing in this fashion, we get $\overline{T_{2i}}\in\text{QF}(T)$ for $3\leq i\leq n$. Replicating the same arguments we can get that $\overline{T_{ij}}\in\text{QF}(T)$ for any $1\leq i<j\leq n$ and thus
\begin{equation*}
   \dim T= \dim F_G \underset{(*)}{=}\dim(F(J_G))=\ell(J_G)\underset{(**)}{=}\mathrm{mcd}(\Delta(J_G))+1,
\end{equation*}
where $(*)$ and $(**)$ follow from Equation~\ref{eq: fib cone to Pluck} and Theorem~\ref{thm-closed1} respectively.
\end{proof}

\subsection*{Acknowledgments}

Work on this project started 
at the Fields Institute in Toronto, Canada
as part of the ``Apprenticeship Program in Commutative Algebra''.  All of the authors thank
the organizers and Fields for providing an wonderful environment in which to work and for funding to travel
to Toronto.  We would like to thank Sudipta Das for his 
feedback on eariler versions of this project. Dey was partly supported by the Charles University Research Center program No.UNCE/24/SCI/022 and a grant GA \v{C}R 23-05148S from the Czech Science Foundation. H\`a acknowledges support from the Simons Foundation.
Landsittel acknowledges his support from the Center for Mathematical Sciences and Applications at Harvard.
Roshan Zamir received support from the NSF grant DMS-2342256 RTG: Commutative Algebra at Nebraska. 
Van Tuyl’s research is supported by NSERC Discovery Grant 2024-05299.

\bibliographystyle{amsplain}

\bibliography{biblio}

\end{document}